\documentclass{amsart}
\usepackage{amsmath}
\usepackage{amsfonts}
\usepackage{amsthm}
\usepackage{graphicx}
\usepackage[all]{xy}
\usepackage{amssymb}
\usepackage{caption}
\usepackage{subcaption}

\textwidth16cm
\topmargin0cm
\oddsidemargin0cm
\evensidemargin0cm
\textheight22.5cm

\newcommand{\bo}[1]{\textbf{#1}}
\newcommand{\dsum}[0]{\displaystyle\sum}

\newcommand{\R}[0]{\mathbb{R}}
\newcommand{\Z}[0]{\mathbb{Z}}
\newcommand{\N}[0]{\mathbb{N}}
\newcommand{\C}[0]{\mathbb{C}}

\newcommand{\dint}[0]{\displaystyle\int}
\newcommand{\dlim}[0]{\displaystyle\lim}

\newcommand{\angbra}[1]{\left\langle #1\right\rangle}
\newcommand{\inner}[2]{\angbra{#1,#2}}
\newcommand{\nbar}[1]{\overline{#1}}
\newcommand{\tr}[0]{\text{tr}}
\newcommand{\ph}[0]{\varphi}

\newcommand{\B}[0]{\mathcal{B}}

\newcommand{\mlim}[0]{\dlim_{m\to\infty}}
\newcommand{\rnx}[0]{\R^n_{\neq0}}
\newcommand{\tx}[0]{t^\xi}
\newcommand{\vx}[0]{v^\xi}

\newtheorem{thm}{Theorem}[section]
\newtheorem{lemma}[thm]{Lemma}
\newtheorem{prop}[thm]{Proposition}
\newtheorem{coro}[thm]{Corollary}



\newcommand{\inj}[0]{\ar@{^{(}->}}
\newcommand{\surj}[0]{\ar@{->>}}
\newcommand{\bij}[0]{\ar@{^{(}->>}}
\newcommand{\lbij}[0]{\ar@{_{(}->>}}
\newcommand{\linj}[0]{\ar@{_{(}->}}
\newcommand{\parr}[0]{\ar@{.>}}
\newcommand{\pinj}[0]{\ar@{^{(}.>}}
\newcommand{\psurj}[0]{\ar@{.>>}}
\newcommand{\pbij}[0]{\ar@{^{(}.>>}}
\newcommand{\lin}[0]{\ar@{-}}
\newcommand{\llin}[0]{\ar@{=}}

\makeatletter
\newtheorem*{rep@theorem}{\rep@title}
\newcommand{\newreptheorem}[2]{%
\newenvironment{rep#1}[1]{%
 \def\rep@title{#2 \ref{##1}}%
 \begin{rep@theorem}}%
 {\end{rep@theorem}}}
\makeatother

\newreptheorem{thm}{Theorem}

\begin{document}
\title{An Elementary Eigenvalue Criterion for One-Parameter Dilation Groups to Admit a Continuous Wavelet} 

\author{Netanel Friedenberg}
\address{Department of Mathematics, Washington University in St. Louis, MO 63130}
\email{nati.friedenberg@wustl.edu}

\author{Peter M. Luthy}
\address{Department of Mathematics, Washington University in St. Louis, MO 63130}
\email{luthy@math.wustl.edu}

\author{Guido L. Weiss}
\address{Department of Mathematics, Washington University in St. Louis, MO 63130}
\email{guido@math.wustl.edu}

\keywords{wavelet, one-parameter group, continuous wavelet, harmonic analysis}

\begin{abstract}
\noindent In this article, we present a simple criterion for checking whether a one-parameter matrix group of dilations admits a continuous wavelet.  This criterion involves only checking that the eigenvalues of the symmetric part of the matrix have the same sign.  In $\R^2$ this criterion gives a complete characterization of such matrix groups.
\end{abstract}
\maketitle

\section{Introduction}

\ \indent\! A (classical, orthonormal) wavelet is a function $\psi\in L^2(\R)$ such that the collection $\{\psi_{j,k}:=2^{-j/2}\psi(2^{-j}\cdot-k)|k\in\Z,j\in\Z\}$ is an orthonormal basis for $L^2(\R)$. So, if $\psi$ is a wavelet and $f\in L^2(\R)$ then we have
\begin{equation}\label{waveletreproduce}
f=\dsum_{j,k\in\Z}\inner{f}{\psi_{j,k}}\psi_{j,k},
\end{equation}
where the equality is in $L^2(\R)$. Much is known about wavelets, and there are many very nice constructions of wavelets, such as those that arise from multiresolution analyses (MRAs). A question that naturally arises is: how does one properly generalize wavelets to $\R^n$? The obvious generalization of requiring that $\{\psi_{j,k}:=2^{-nj/2}\psi(2^{-j}\cdot-k)|k\in\Z^n,j\in\Z\}$ be an orthonormal basis for $L^2(\R^n)$ does produce a theory of $n$-dimensional wavelets. 
Some aspects of this theory do not satisfactorily match the 1-dimensional case, though. 
For example, the construction of wavelets from MRAs in $L^2(\R^n)$ is much more complicated. If one uses the dyadic dilations $2^{j}$ for the MRA, then $2^n-1$ functions are needed to produce a spanning set, which is a consequence of the fact that the function $x\mapsto 2x$ on $\R^n$ has determinant $2^n$ (see~\cite{weiss}, Section 4). There are many other choices for dilations, and it is not clear which ones, if any, constitute the proper generalization of the classical wavelets --- see, for example, the discussion in the beginning of \cite{haar2007}. 
As such, many other systems have been developed, such as composite dilation wavelets and shearlets (see~\cite{LWW2013} pp 69-72 for an overview of these and other systems).

There is another variant of the wavelets that generalizes quite nicely to $\R^n$. These systems use a continuous translation parameter, and so the wavelets that arise from them are called continuous wavelets. For any closed subgroup $G$ of $GL_n(\R)$ with a left Haar measure $\mu$, we consider the group $G_\#$ of all pairs $(a,b)\in G\times \R^n$ with multiplication given by $(\alpha,\beta)\cdot(a,b)=(\alpha a,b+a^{-1}\beta)$. This operation arises from the action $((a,b),x)\mapsto a(x+b)$ on $\R^n$. We therefore say that $G$ is the group of dilations, and $\R^n$ is the group of translations. Thus this group is related to the affine group $G\ltimes\R^n$ which is associated with the action $((a,b),x)\mapsto ax+b$. Indeed, the map $\phi:G_\#\to G\ltimes\R^n$ given by $\phi(a,b)=(a^{-1},b)$ is an anti-isomorphism of groups. Note that the element of a left Haar measure for $G_\#$ is given by $d\lambda(a,b)=d\mu(a)db$. We consider the unitary representation of $G_\#$ on $L^2(\R^n)$ given by 
$$\psi,(a,b)\mapsto \psi_{(a,b)}(x):=\frac{\psi((a,b)^{-1}x)}{|\det(a)|^{1/2}}=|\det(a)|^{-\frac{1}{2}}\psi(a^{-1}x-b).$$
$\psi\in L^2(\R^n)$ is called a continuous wavelet for $G$ if the reproducing formula
$$f(x)=\int_{G_\#}\inner{f}{\psi_{(a,b)}}\psi_{(a,b)}(x)d\lambda(a,b),$$
which is clearly the (direct) analog of (\ref{waveletreproduce}), is true for all $f\in L^2(\R^n)$. 
As shown in~\cite{weiss}, $\psi\in L^2(\R^n)$ is such a wavelet if and only if for almost every $\xi\in\rnx:=\R^n-\{0\}$,
\begin{equation}\label{admisscond}
\Delta_\psi(\xi):=\dint_G |\hat{\psi}(a^T\xi)|^2 d\mu(a)=1
\end{equation}
where $a^T$ denotes the transpose of the matrix $a$. This formula is a natural analog of the dyadic Calder$\acute{\text{o}}$n condition for classical wavelets on $L^2(\R)$, that $\dsum_{j\in\Z}|\hat{\psi}(2^j\xi)|=1$ for a.e.\ $\xi\in\R$, which is also the specific instance of (\ref{admisscond}) when $G$ the dyadic dilation group $\{2^j|j\in\Z\}$. In the case of classical wavelets, where $\Z$ translations are used, this equation is not sufficient to show that $\psi$ is a wavelet. Rather, $\psi$ must also satisfy $t_q(\xi):=\dsum_{j\geq0}\hat{\psi}(2^j\xi)\nbar{\hat{\psi}(2^j(\xi+q))}=0$ almost everywhere, for every odd integer $q$, which stems from the fact that dyadic dilations and integer translations do not form a group together. Specifically, letting $D^j\psi=2^{-j/2}\psi(2^{-j}\cdot)$ and $T_k\psi=\psi(\cdot-k)$, we have that $D^jT_k=T_{2^jk}D^j$, but $2^jk$ may not be an integer even when $j$ and $k$ are. That the single condition is sufficient when $\R$ translations are used indicates that $\R$ translation is in some ways nicer. Indeed, we immediately see that any classical wavelet is a continuous wavelet for the dyadic dilation group $\{2^j|j\in\Z\}$, and it is not hard to show that (up to renormalization) it is also a continuous wavelet for $\R-\{0\}$.

Not every closed subgroup $G$ of $GL_n(\R)$ allows a $\psi$ satisfying (\ref{admisscond}). If $G$ does admit a continuous wavelet, then $G$ is called admissible.  The question of admissibility or non-admissibility of matrix groups has been studied previously, and an (almost) characterization was produced in \cite{LWWW2002}; the discussion therein involves the study of so-called $\epsilon$-stabilizers which are not so easy to understand.  At present, we are interested in determining which 1-parameter groups are admissible. These are groups of the form $\{e^{tM}:t\in\R\}$ where $M$ is a fixed $n\times n$ matrix.  We will see that there is an elementary criterion for checking when such matrix groups are admissible. In section 2 we will show that if the eigenvalues of the symmetric part of $M$ are all nonzero reals that have the same sign, then $\{e^{tM}:t\in\R\}$ is admissible. This will then be used in section 3 to give a simple characterization of the admissible one-parameter groups over $\R^2$.

Our idea was to show that if $D$ was a diagonal matrix with positive eigenvalues and $A$ was an antisymmetric matrix, then $\{e^{t(D+A)}:t\in\R\}$ was an admissible group. Recall that every matrix $X$ can be written as the sum of a symmetric matrix and an antisymmetric matrix, $X=\frac{A+A^T}{2}+\frac{A-A^T}{2}$, so this would be a good start on the problem. We would show this by noting that for positive $t$, $e^{tD}$ is a matrix that increases the norm of a vector, and for any $t\in\R$, $e^{tA}$ is an orthogonal matrix. The problem with this approach is that if $A$ and $D$ do not commute, then $e^{t(D+A)}$ may not be equal to $e^{tD}e^{tA}$. The Lie product formula, however, enables us to prove this and more general results in spite of the matrix exponential not splitting in this way. Before proving this formula, we first review a few basic facts about the matrix exponential and logarithm.

Recall that the matrix exponential $\exp: M_n(\C)\to GL_n(\C)$ mapping $X\mapsto e^{X}$ is a smooth function such that\\
\indent1. $e^0=I$,\\
\indent2. $e^{X^T}=(e^X)^T$,\\
\indent3. if $X$ and $Y$ are commuting matrices, then $e^{X+Y}=e^Xe^Y$,\\
\indent4. for any invertible matrix $C$, $e^{CXC^{-1}}=Ce^XC^{-1}$,\\
\indent5. $\|e^X\|\leq e^{\|X\|}$, and\\
\indent6. $e^X=I+X+O(\|X\|^2)$.\\
Also recall that the matrix logarithm $\log(A)=\dsum_{m=1}^\infty(-1)^{m+1}\frac{(A-I)^m}{m}$, when restricted to the domain $\{A\in M_n(\C):\|A-I\|<1\}$, is a well-defined matrix-valued function such that\\
\indent1. $e^{\log(A)}=A$ if $\|A-I\|<1$, and\\
\indent2. $\log(I+B)=B+O(\|B\|^2)$.\\

\begin{thm}[\bo{Lie Product Formula}]\label{lieprod}
Let $X$ and $Y$ be any $n\times n$ complex matrices. Then
$$e^{X+Y}=\mlim\left(e^{\frac{X}{m}}e^{\frac{Y}{m}}\right)^m.$$
\end{thm}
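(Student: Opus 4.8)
The plan is to set $S_m := e^{X/m}e^{Y/m}$ and to show that $S_m^m \to e^{X+Y}$ by analyzing $\log(S_m)$ for large $m$. First I would observe that as $m\to\infty$ both $e^{X/m}$ and $e^{Y/m}$ tend to $I$ (immediate from property 6 of the exponential), so $S_m\to I$; in particular $\|S_m-I\|<1$ for all sufficiently large $m$, which places $S_m$ in the domain where $\log$ is defined and satisfies $e^{\log(S_m)}=S_m$. From this point on I restrict attention to those large $m$.

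Next I would compute the first-order expansion of $S_m$. Using $e^{X/m}=I+\frac{X}{m}+O(1/m^2)$ and $e^{Y/m}=I+\frac{Y}{m}+O(1/m^2)$ and multiplying out, the cross term $\frac{X}{m}\cdot\frac{Y}{m}$ is itself $O(1/m^2)$, so $S_m=I+\frac{X+Y}{m}+O(1/m^2)$, where the implied constants depend only on $\|X\|$ and $\|Y\|$. Writing $B:=S_m-I=\frac{X+Y}{m}+O(1/m^2)$ and applying the logarithm expansion $\log(I+B)=B+O(\|B\|^2)$, I obtain $\log(S_m)=\frac{X+Y}{m}+O(1/m^2)$, and hence $m\log(S_m)=(X+Y)+O(1/m)\longrightarrow X+Y$.

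Finally, since any matrix commutes with itself, property 3 of the exponential applied inductively gives $S_m^m=\bigl(e^{\log(S_m)}\bigr)^m=e^{m\log(S_m)}$. Because $\exp$ is continuous, the limit $m\log(S_m)\to X+Y$ from the previous step yields $S_m^m=e^{m\log(S_m)}\longrightarrow e^{X+Y}$, which is exactly the claimed formula.

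The main obstacle I anticipate is bookkeeping the error terms rigorously rather than merely formally: I must check that the two $O(1/m^2)$ remainders coming from the exponential expansions combine correctly under multiplication, and that feeding $B=O(1/m)$ into the quadratic remainder of the logarithm genuinely produces an $O(1/m^2)$ contribution that vanishes even after multiplication by $m$. This reduces to replacing each asymptotic symbol with an explicit norm bound and tracking how the estimates compose, but it presents no conceptual difficulty once the chain of approximations is made precise.
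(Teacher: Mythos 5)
Your proof is correct and follows essentially the same route as the paper's: expand $e^{X/m}e^{Y/m}=I+\frac{X+Y}{m}+O(m^{-2})$, take the logarithm to get $\log(S_m)=\frac{X+Y}{m}+O(m^{-2})$, multiply by $m$, and conclude by continuity of $\exp$. Your explicit remark that $\bigl(e^{\log S_m}\bigr)^m=e^{m\log S_m}$ because a matrix commutes with itself is a small point the paper leaves implicit, but the argument is otherwise identical.
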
\begin{proof}
Fix $X$ and $Y$. We have $e^\frac{X}{m}e^\frac{Y}{m}=I+\frac{X}{m}+\frac{Y}{m}+O(m^{-2})$. Since as $m\to\infty$ this goes to $I$, $e^\frac{X}{m}e^\frac{Y}{m}$ is eventually in the domain of the logarithm. So we can write
$$\log\left(e^\frac{X}{m}e^\frac{Y}{m}\right)=\log\left(I+\frac{X}{m}+\frac{Y}{m}+O(m^{-2})\right)=\frac{X}{m}+\frac{Y}{m}+O(m^{-2}).$$
Exponentiating the logarithm now gives us 
$e^\frac{X}{m}e^\frac{Y}{m}=\exp\left(\frac{X}{m}+\frac{Y}{m}+O(m^{-2})\right),$
and so 
$\left(e^\frac{X}{m}e^\frac{Y}{m}\right)^m=\exp(X+Y+O(m^{-1}))$. 
So, by the continuity of the exponential, we have 
\begin{align*}
\mlim\left(e^\frac{X}{m}e^\frac{Y}{m}\right)^m&=\mlim\exp(X+Y+(m^{-1}))\\
&=\exp\left(\mlim(X+Y+O(m^{-1}))\right)=\exp(X+Y),
\end{align*}
which is the Lie product formula.
\end{proof}

\section{Criteria for Admissibility of Groups in Terms of Eigenvalues}

\ \indent\! The main goal of this section is to prove
\begin{thm}\label{thm:mainthm}
Let $M$ be any $n\times n$ real matrix that is orthogonally diagonalizable, and suppose that all of the eigenvalues of $M$ are nonzero and have the same sign. Let $A$ be any anti-symmetric $n\times n$ matrix. Then $G_{M+A}=\{Q_t=e^{t(M+A)}:t\in\R\}$ is admissible.
\end{thm}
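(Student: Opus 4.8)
The plan is to reduce admissibility to a one-dimensional integral condition along the orbits of a single flow, to use the Lie product formula (Theorem~\ref{lieprod}) to show that this flow expands norms strictly monotonically, and then to construct $\psi$ by hand in coordinates adapted to the orbits. First I would record that a real orthogonally diagonalizable matrix is symmetric, so $M$ is symmetric, and after possibly replacing $M$ by $-M$ and $t$ by $-t$ we may assume $M$ is positive definite; let $\lambda>0$ denote its smallest eigenvalue. Since $M^T=M$ and $A^T=-A$, we have $Q_t^T=e^{t(M+A)^T}=e^{t(M-A)}=e^{tB}$, where $B:=M-A$. The group $G_{M+A}$ is abelian and parametrized by $t\mapsto Q_t$, so its left Haar measure is Lebesgue measure $dt$ up to a positive constant, and since replacing $\psi$ by $c\psi$ scales $\Delta_\psi$ by $c^2$ this constant is harmless. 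Hence by (\ref{admisscond}) it suffices to find $\psi\in L^2(\R^n)$ so that, writing $g:=|\hat\psi|^2$, we have $\int_\R g(e^{tB}\xi)\,dt=1$ for a.e.\ $\xi\in\rnx$.

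Next I would establish the key geometric fact: for each fixed $\xi\neq0$ the function $t\mapsto|e^{tB}\xi|$ is strictly increasing and surjects onto $(0,\infty)$. By Theorem~\ref{lieprod}, $e^{tB}=\mlim\left(e^{\frac{tM}{m}}e^{-\frac{tA}{m}}\right)^m$. For $t>0$ each factor $e^{-tA/m}$ is orthogonal (by property 2 of the exponential, $A^T=-A$ forces $e^{-tA/m}$ orthogonal), hence norm-preserving, while $e^{tM/m}$ is symmetric positive definite with smallest eigenvalue $e^{t\lambda/m}>1$, so $|e^{tM/m}w|\geq e^{t\lambda/m}|w|$. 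Iterating across the $m$ factors and passing to the limit gives $|e^{tB}v|\geq e^{t\lambda}|v|$ for all $t>0$. Applying this to $v=e^{sB}\xi$ shows $|e^{tB}\xi|>|e^{sB}\xi|$ whenever $t>s$, so the norm is strictly increasing, tends to $\infty$ as $t\to+\infty$, and (using the same bound for the inverse flow) tends to $0$ as $t\to-\infty$. In particular $t\mapsto e^{tB}$ is injective and proper, which also confirms that $G_{M+A}$ is a closed subgroup isomorphic to $\R$, justifying the Haar measure claim above.

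By continuity and strict monotonicity each orbit $\{e^{tB}\xi:t\in\R\}$ meets the unit sphere $S^{n-1}$ exactly once, so $\Phi(\omega,s):=e^{sB}\omega$ is a bijection $S^{n-1}\times\R\to\rnx$. It is a diffeomorphism: $\Phi$ is smooth, and its differential is invertible because the only transversality condition to check is $\langle B\omega,\omega\rangle\neq0$, which holds since $\langle B\omega,\omega\rangle=\langle M\omega,\omega\rangle-\langle A\omega,\omega\rangle=\langle M\omega,\omega\rangle\neq0$ by positive-definiteness of $M$ and antisymmetry of $A$. I would then fix any smooth $\rho\geq0$ with compact support and $\int_\R\rho=1$, and define $g$ by $g(\Phi(\omega,s)):=\rho(s)$, taking $\hat\psi:=\sqrt{g}$. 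If $\mathrm{supp}\,\rho\subseteq[-T,T]$, then $g$ is continuous and supported in the compact annulus $e^{-T\|B\|}\leq|\xi|\leq e^{T\|B\|}\subseteq\rnx$, hence $g\in L^1$, so $\psi\in L^2(\R^n)$ by Plancherel. Finally, writing $\xi=\Phi(\omega,s_0)=e^{s_0B}\omega$ gives $e^{tB}\xi=\Phi(\omega,t+s_0)$, so
$$\int_\R g(e^{tB}\xi)\,dt=\int_\R\rho(t+s_0)\,dt=1,$$
as required.

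The main obstacle is the monotonicity step: everything downstream depends on knowing that the flow $t\mapsto e^{tB}\xi$ sweeps each ray of radii exactly once, which is precisely what allows the orbit integral to be made constant and independent of the starting point. This is exactly where the hypotheses enter, via the identity $B+B^T=2M$ (equivalently, the Lie product decomposition into a norm-expanding symmetric factor and an orthogonal factor) together with definiteness of $M$; if the eigenvalues of $M$ had mixed signs this foliation would break down. The remaining points needing care are the transpose identity $Q_t^T=e^{tB}$, the verification that $\Phi$ is a genuine diffeomorphism, and the $L^2$-membership of $\psi$, but these are routine once the monotonicity is in hand.
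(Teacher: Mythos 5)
Your proof is correct, and its core strategy is the same as the paper's: use the Lie product formula to split $e^{t(M+A)}$ into a norm-expanding symmetric factor and an orthogonal factor, deduce that $t\mapsto\|e^{tB}\xi\|$ is strictly increasing with range $(0,\infty)$, parametrize $\rnx$ by $S^{n-1}\times\R$ along orbits, and take $|\hat\psi|^2$ to be a unit-mass profile in the time coordinate alone. You diverge in two technical places, both to good effect. First, you absorb the transpose at the outset by setting $B=M-A=(M+A)^T$ and building the wavelet directly for the flow $e^{tB}$; the paper instead runs the whole construction for $G_{M^T+A^T}$ and then transposes back at the end --- your version is cleaner but logically equivalent, since the hypotheses are transpose-invariant. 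Second, and more substantively, where the paper proves continuity of the time-coordinate map $\xi\mapsto t^\xi$ by a point-set topology argument (Lemma on compact-to-Hausdorff maps plus Proposition \ref{prop:bighomeo}, applied to the extension $F_0$ of the inverse map to the origin), you instead show that $\Phi(\omega,s)=e^{sB}\omega$ is a bijective local diffeomorphism: the differential at $(\omega,s)$ is $(v,\tau)\mapsto e^{sB}(v+\tau B\omega)$, which is invertible precisely because $\langle B\omega,\omega\rangle=\langle M\omega,\omega\rangle>0$ (the antisymmetric part contributing nothing to the quadratic form). This buys smoothness of the orbit coordinates rather than mere continuity, shortens the argument considerably, and makes visible exactly where definiteness of $M$ is used a second time; the paper's topological route is more elementary in that it avoids the inverse function theorem. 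Your remaining details --- the identification of Haar measure with $dt$ up to a harmless constant, the compact annulus containing $\mathrm{supp}\,\hat\psi$, and the translation-invariance computation $\int_\R\rho(t+s_0)\,dt=1$ --- all check out.
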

First, however, we need a technical topological result. 
Note that from now on all matrices will be assumed to be real unless stated otherwise.

\begin{lemma}\label{lemma:comphaushomeo}
Let $X$ be a compact space, $Y$ be a Hausdorff space, and $f:X\to Y$ a continuous function. Then $f$ is closed. In particular, if $f$ is a continuous bijection, then $f$ is a homeomorphism.
\end{lemma}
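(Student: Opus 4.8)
The plan is to prove that $f$ is a closed map by chaining together the three standard interactions between compactness, continuity, and the Hausdorff axiom, and then to deduce the homeomorphism statement as a purely formal consequence. First I would fix an arbitrary closed set $C\subseteq X$ and observe that a closed subset of a compact space is compact: any open cover of $C$ becomes an open cover of $X$ after adjoining the open set $X\setminus C$, this admits a finite subcover by compactness of $X$, and discarding $X\setminus C$ leaves a finite subcover of $C$. Hence $C$ is compact.

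Second, I would use that the continuous image of a compact set is compact. Given an open cover $\{U_\alpha\}$ of $f(C)$, the preimages $\{f^{-1}(U_\alpha)\}$ form an open cover of $C$ by continuity of $f$; a finite subcollection covers $C$, and the corresponding finitely many $U_\alpha$ then cover $f(C)$. Thus $f(C)$ is compact.

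The crux—and the only step that uses the Hausdorff hypothesis—is showing that a compact subset $K$ of a Hausdorff space $Y$ is closed. I would argue that $Y\setminus K$ is open by fixing $y\notin K$ and, for each $k\in K$, invoking the Hausdorff property to choose disjoint open sets $V_k\ni y$ and $W_k\ni k$. The family $\{W_k\}_{k\in K}$ covers $K$, so by compactness finitely many $W_{k_1},\dots,W_{k_r}$ suffice, and then the finite intersection $V_{k_1}\cap\cdots\cap V_{k_r}$ is an open neighborhood of $y$ disjoint from $\bigcup_i W_{k_i}\supseteq K$. Applying this with $K=f(C)$, which is compact by the previous step, shows $f(C)$ is closed in $Y$; since $C$ was an arbitrary closed set, $f$ is a closed map.

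Finally, for the homeomorphism claim, I would note that if $f$ is a continuous bijection then $g:=f^{-1}$ is a well-defined function whose continuity is equivalent to the condition that $g^{-1}(C)=f(C)$ be closed in $Y$ for every closed $C\subseteq X$—which is exactly what we have just established. Hence $g$ is continuous and $f$ is a homeomorphism. The only real obstacle is the third step, where the Hausdorff axiom is needed to manufacture the separating neighborhoods; the remaining steps are routine manipulations of open covers.
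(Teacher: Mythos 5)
Your proof is correct and complete; it is the standard ``elementary topology'' argument (closed subset of compact is compact, continuous image of compact is compact, compact subset of Hausdorff is closed), which is precisely what the paper appeals to, since its own proof consists only of the remark ``Elementary topology.''
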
\begin{proof} Elementary topology.\end{proof}

\begin{prop}\label{prop:bighomeo}
Let $X$ be a Hausdorff space, $Y$ be any topological space, and $g:X\to Y$ an open bijection. Suppose that there exists a collection $\B$ of compact subsets of $Y$ with the property that for any $x\in X$ there is some $B\in\B$ such that $g^{-1}[B]$ contains an open neighborhood of $x$. Then $g$ is a homeomorphism.
\end{prop}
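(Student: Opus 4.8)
The plan is to exploit that $g$ is an \emph{open} bijection, which already forces its set-theoretic inverse $g^{-1}:Y\to X$ to be continuous; indeed, $g$ open means $g[U]$ is open for every open $U\subseteq X$, and $g[U]=(g^{-1})^{-1}[U]$, so $g^{-1}$ is continuous. Thus it suffices to prove that $g$ itself is continuous, for then $g$ is a continuous bijection with continuous inverse, i.e.\ a homeomorphism. The hypotheses are tailor-made for Lemma \ref{lemma:comphaushomeo}, but with the roles reversed: rather than applying it to $g$, I would apply it to the restriction of $g^{-1}$ to each compact set $B\in\B$.

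Fix $B\in\B$ and consider the restriction $g^{-1}|_B:B\to g^{-1}[B]$. Because $g$ is a bijection, this is a bijection onto $g^{-1}[B]$, and because $g^{-1}$ is continuous (as just noted), the restriction is continuous. Its domain $B$ is compact, and its codomain $g^{-1}[B]$, being a subspace of the Hausdorff space $X$, is Hausdorff. Lemma \ref{lemma:comphaushomeo} then shows that $g^{-1}|_B$ is a homeomorphism, so its inverse --- which is precisely the restriction $g|_{g^{-1}[B]}:g^{-1}[B]\to B$ --- is continuous.

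To finish, I would upgrade this to continuity of $g$ on all of $X$ via the fact that continuity is a local property. Given any $x\in X$, the hypothesis supplies some $B\in\B$ and an open set $U_x$ with $x\in U_x\subseteq g^{-1}[B]$. Since $U_x$ is open in $X$ and contained in $g^{-1}[B]$, the subspace topology it inherits from $X$ agrees with the one it inherits from $g^{-1}[B]$, so $g|_{U_x}$ is the restriction of the continuous map $g|_{g^{-1}[B]}$ and is therefore continuous (composing with the continuous inclusion $B\hookrightarrow Y$ to regard it as a map into $Y$). The sets $U_x$ form an open cover of $X$ on each of which $g$ restricts to a continuous map, so $g$ is continuous. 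Together with the continuity of $g^{-1}$, this makes $g$ a homeomorphism.

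The step I expect to require the most care is the reversal in the second paragraph: the compactness hypothesis lives on the $Y$ side (the sets $B$), so Lemma \ref{lemma:comphaushomeo} cannot be applied to $g$ directly and must instead be applied to $g^{-1}|_B$, drawing on openness of $g$ to supply continuity of $g^{-1}$ and on Hausdorffness of $X$ to supply the needed separation. The gluing in the final paragraph is routine once one checks that each $U_x$ sits inside $g^{-1}[B]$ with compatible subspace topologies, so that local continuity of $g$ there genuinely yields global continuity.
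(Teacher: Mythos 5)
Your proof is correct and follows essentially the same route as the paper's: both derive continuity of $g^{-1}$ from openness of $g$, apply Lemma \ref{lemma:comphaushomeo} to the restriction of $g^{-1}$ to each compact $B\in\B$ to get continuity of $g|_{g^{-1}[B]}$, and then glue these local continuity statements using the hypothesized open neighborhoods. Your extra care about the subspace topologies on the sets $U_x$ is a slightly more explicit rendering of the paper's appeal to continuity at a point, but the argument is the same.
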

Note: The case in which we will use this proposition is with $X=Y=\R^n$. $\B$ will be the collection of closed balls centered at the origin, and $g$ will be a bijection from $\R^n$ to itself such that $g^{-1}$ is continuous and the images under $g^{-1}$ of sufficiently large closed balls contain large open balls centered at the origin. The proposition is only stated in such generality because the general result is no more difficult to prove than the specific case.
\begin{proof}
Let $f=g^{-1}$, so $f$ is continuous. For any $B\in\B$, $f|_B:B\to f[B]$ is a continuous bijection from the compact space $B$ to the Hausdorff space $f[B]\subset X$. So by Lemma \ref{lemma:comphaushomeo} $f|_B:B\to f[B]$ is a homeomorphism, so its inverse, $(f|_B)^{-1}=g|_{f[B]}$, is continuous. Now, for any $x\in X$ find some $B_x\in\B$ such that $f[B_x]=g^{-1}[B_x]$ contains an open neighborhood of $x$. Since $g|_{f[B_x]}:f[B_x]\to B_x$ is continuous at $x$, and since $f[B_x]$ contains an open neighborhood of $x$ in $X$, this gives us that $g$ is continuous at $x$. Since this is true for all $x\in X$, $g$ is continuous on all of $X$. So $g:X\to Y$, being a continuous open bijection, is a homeomorphism.
\end{proof}

We are now properly equipped to begin proving our main theorem, which we restate here.

\begin{repthm}{thm:mainthm}
Let $M$ be any $n\times n$ real matrix that is orthogonally diagonalizable, and suppose that all of the eigenvalues of $M$ are nonzero and have the same sign. Let $A$ be any anti-symmetric $n\times n$ matrix. Then $G_{M+A}=\{Q_t=e^{t(M+A)}:t\in\R\}$ is admissible.
\end{repthm}\begin{proof}
Letting $Q_{S}=\{e^{t(M+A)}:t\in S\}$ for all $S\subset\R$, we see that $e^{t_0(M+A)}Q_{[a,b]}=Q_{[a+t_0,b+t_0]}$ for any $t_0\in\R$. Note that this is the same as $\R$ acting on its closed intervals by addition, and the sets $Q_{[a,b]}$ generate all of the borel sets of $G_{M+A}$. So, choosing the (left) Haar measure $\mu$ on $G_{M+A}$ such that $\mu(Q_{[0,1]})=1$, we have that $\mu(Q_S)$ is just the Lebesgue measure of $S$ for any measurable $S\subset\R$. Hence, integrating over $G_{M+A}$ with the measure $\mu$ gives the same result as integrating over $\R$ with the Lebesgue measure, considering any instance of the group element $e^{t(M+A)}$ as a function of $t\in\R$.

We claim that we may assume, without loss of generality, that the eigenvalues of $M$ are all positive. To see this, note that if $M$ has all negative eigenvalues, reparameterizing by $t\mapsto-t$ we get $G_{M+A}=\{e^{t(-M-A)}:t\in\R\}$. Since $-M$ will still be orthogonally diagonalizable, but have as its eigenvalues the eigenvalues of $M$ times $-1$, and since $-A$ is also anti-symmetric, we can view the group this way and have all of the eigenvalues of $M$ positive.

Now, we can find an orthogonal matrix $O$ such that $M=ODO^T$ where
$$D=\begin{bmatrix}
\lambda_1&&0\\
&\ddots&\\
0&&\lambda_n\end{bmatrix}$$
with $\lambda_1\geq\cdots\geq\lambda_n>0$. For any $t\in\R$ we have $e^{tM}=Oe^{tD}O^T$ and 
$$e^{tD}=\begin{bmatrix}
e^{t\lambda_1}&&0\\
&\ddots&\\
0&&e^{t\lambda_n}\end{bmatrix}.$$
Note that for $t>0$ we have that $e^{t\lambda_1}\geq\cdots\geq e^{t\lambda_n}>1$, so for any $v\in\R^n$, we have that $e^{t\lambda_1}\|v\|\geq\|e^{tD}v\|\geq e^{t\lambda_n}\|v\|$. Similarly, for $t<0$ and any $v\in\R^n$ we have that $e^{t\lambda_1}\|v\|\leq\|e^{tD}v\|\leq e^{t\lambda_n}\|v\|$. Also, for any $t\in\R$, we have $(e^{tA})^T=e^{tA^T}=e^{-tA}=(e^{tA})^{-1}$, so $e^{tA}$ is orthogonal.

We claim that for $t>0$ and any $v\in\R^n$, $\left\|e^{t(M+A)}v\right\|\geq e^{t\lambda_n}\|v\|$. Towards this, note that for any $m\in\N$, because multiplication by an orthogonal matrix is an isometry, we have 
\begin{align*}
\left\|e^\frac{tM}{m}e^\frac{tA}{m}v\right\|&=\left\|Oe^\frac{tD}{m}O^Te^\frac{tA}{m}v\right\|
=\left\|e^\frac{tD}{m}O^Te^\frac{tA}{m}v\right\|\\
&\geq e^{\frac{t}{m}\lambda_n}\left\|O^Te^\frac{tA}{m}v\right\|
= e^{\frac{t}{m}\lambda_n}\left\|v\right\|.
\end{align*}
Iterating this (by plugging in $e^\frac{tM}{m}e^\frac{tA}{m}v$ for $v$) $m$ times, we get that $\left\|\left(e^\frac{tM}{m}e^\frac{tA}{m}\right)^m v\right\|\geq\left(e^{\frac{t}{m}\lambda_n}\right)^m \|v\|=e^{t\lambda_n} \|v\|$. So now, by the Lie product formula, we have 
\begin{align*}\left\|e^{t(M+A)}v\right\|
&=\left\|\mlim \left(e^\frac{tM}{m}e^\frac{tA}{m}\right)^m v\right\|
=\mlim \left\|\left(e^\frac{tM}{m}e^\frac{tA}{m}\right)^m v\right\|\\
&\geq\mlim e^{t\lambda_n} \|v\|=e^{t\lambda_n}\|v\|,\end{align*}
as we claimed. Note that the exact same method gives us that for $t>0$ and $v\in\R^n$, $e^{t\lambda_1}\|v\|\geq\|e^{t(M+A)}v\|$, and that for $t<0$ and $v\in\R^n$, $e^{t\lambda_1}\|v\|\leq\left\|e^{t(M+A)}v\right\|\leq e^{t\lambda_n}\|v\|$.

This immediately shows us that for a fixed $v\in\rnx:=\R^n-\{0\}$, the function $\left\|e^{t(M+A)}v\right\|$ is strictly increasing in $t$. For if $t_0<t_1$, then we have $\left\|e^{t_1(M+A)}v\right\|=\left\|e^{(t_1-t_0)(M+A)}e^{t_0(M+A)}v\right\|\geq e^{(t_1-t_0)\lambda_n}\left\|e^{t_0(M+A)}v\right\|>\left\|e^{t_0(M+A)}v\right\|$. Moreover, we now know the limiting behavior of $\left\|e^{t(M+A)}v\right\|$. As $t\to\infty$, $e^{t\lambda_n}\to\infty$, and so $\left\|e^{t(M+A)}v\right\|\geq e^{t\lambda_n}\|v\|$ must also go to $\infty$ as well. Similarly, as $t\to-\infty$, $e^{t\lambda_n}\to0$, and so $0\leq\left\|e^{t(M+A)}v\right\|\leq e^{t\lambda_n}\|v\|$ gives us that $\left\|e^{t(M+A)}v\right\|\to 0$ as well. So we have shown that $t\mapsto\left\|e^{t(M+A)}v\right\|$, for any fixed $v\in\rnx$, is a strictly increasing function, the range of which is all of $\R^+:=\{r\in\R:r>0\}$. 
This tells us that for a fixed $v\in\rnx$ and a given positive real number $r$, there is a unique $e^{t(M+A)}\in G_{M+A}$ such that $\left\|e^{t(M+A)}v\right\|=r$. That is, the orbit of $v$ under the action of $G_{M+A}$ has a unique representative (in $\rnx$) with norm $r$. In particular, this means that when considering the orbits of $\rnx$ under $G_{M+A}$ we are just considering the orbits of those elements in $S^{n-1}$, the unit sphere in $\R^n$. So for any $\xi\in\rnx$, we can write it uniquely as $\xi=e^{t(M+A)}v$ with $t\in\R$ and $v\in S^{n-1}$.

At this point we need the following lemma, which is rather technical.

\begin{lemma}
Let $g$ be the function mapping any $\xi\in\rnx$ to the unique $t\in\R$ such that $\xi=e^{t(M+A)}v$ for some $v\in S^{n-1}$. Then $g:\rnx\to\R$ is continuous.
\end{lemma}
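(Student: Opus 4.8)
The plan is to realize the lemma's function $g$ as a component of the inverse of an explicit self-bijection of $\R^n$, and to obtain that inverse's continuity from Proposition \ref{prop:bighomeo}. Define $F:\R^n\to\R^n$ by $F(0)=0$ and, for $\xi\neq0$,
$$F(\xi)=e^{(\ln\|\xi\|)(M+A)}\frac{\xi}{\|\xi\|},$$
so that $F$ sends the ordinary polar data $(\|\xi\|,\xi/\|\xi\|)$ of $\xi$ to the $(M+A)$-orbit point at ``time'' $\ln\|\xi\|$ through the corresponding unit vector. I will first check that $F$ is a continuous bijection: continuity away from $0$ is clear since $\xi\mapsto\|\xi\|$, $\xi\mapsto\xi/\|\xi\|$, $\log$, and $\exp$ are continuous, while continuity at $0$ follows from the bound $\|F(\xi)\|\leq\|\xi\|^{\lambda_n}\to0$ (valid for $\|\xi\|<1$ by the $t<0$ estimate in the theorem's proof, since there $\|e^{t(M+A)}v\|\leq e^{t\lambda_n}\|v\|$). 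Bijectivity is exactly the unique orbit decomposition $\xi=e^{t(M+A)}v$ with $v\in S^{n-1}$ established above, taking $s=e^t$. Unwinding this decomposition also shows that for $\xi\in\rnx$ one has $F^{-1}(\xi)=e^{g(\xi)}v\in\rnx$, so that $g(\xi)=\ln\|F^{-1}(\xi)\|$. Thus it suffices to prove that $F^{-1}$ is continuous on $\R^n$, for then $g$ is a composition of continuous maps on $\rnx$.

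To get continuity of $F^{-1}$, I will apply Proposition \ref{prop:bighomeo} to $F^{-1}:\R^n\to\R^n$ taken as the proposition's open bijection (so that $F$ plays the role of the continuous inverse), with $X=Y=\R^n$ and $\B$ the collection of closed balls $\overline{B}(0,\rho)$ centered at the origin. These balls are compact, $\R^n$ is Hausdorff, and $F$ is continuous; since $(F^{-1})[U]=\{x:F(x)\in U\}$ for open $U$, continuity of $F$ makes $F^{-1}$ an open bijection, so the standing hypotheses hold. The remaining covering hypothesis asks that for each $x\in\R^n$ some $F[\overline{B}(0,\rho)]$ contain an open neighborhood of $x$. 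Here I will use the two-sided estimate $e^{t\lambda_n}\|v\|\leq\|e^{t(M+A)}v\|\leq e^{t\lambda_1}\|v\|$ for $t\geq0$ from the theorem to show $F[\overline{B}(0,\rho)]\supseteq\overline{B}(0,\rho^{\lambda_n})$ for $\rho\geq1$: indeed $F[\overline{B}(0,\rho)]=\{0\}\cup\{\eta\in\rnx:g(\eta)\leq\ln\rho\}$, and the estimate gives $\|\eta\|\leq\rho^{\lambda_n}\Rightarrow g(\eta)\leq\ln\rho$ (splitting into the cases $\|\eta\|\leq1$ and $1<\|\eta\|\leq\rho^{\lambda_n}$). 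Choosing $\rho$ with $\rho^{\lambda_n}>\|x\|$ then supplies the required neighborhood, and the proposition yields that $F^{-1}$ is a homeomorphism.

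The main obstacle is this covering estimate, namely verifying that the continuous map $F$ carries large closed balls onto sets containing large balls about the origin; everything rests on the quantitative norm bounds for $\|e^{t(M+A)}v\|$ proved in the theorem, which convert control of $\|\eta\|$ into control of $g(\eta)=t$. The continuity of $F$ at the origin and the bookkeeping identifying $g(\xi)$ with $\ln\|F^{-1}(\xi)\|$ are comparatively routine, as is the verification that $F^{-1}$ is an open bijection. Once Proposition \ref{prop:bighomeo} delivers continuity of $F^{-1}$, continuity of $g$ on $\rnx$ is immediate.
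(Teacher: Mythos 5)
Your proposal is correct and follows essentially the same route as the paper: the paper also introduces the map $F_0(u)=e^{(\log\|u\|)(M+A)}\frac{u}{\|u\|}$ (extended by $0$ at the origin), checks continuity at $0$ via the $t<0$ norm estimate, shows $F_0\left(\nbar{B}(\rho)\right)\supset B\left(\rho^{\lambda_n}\right)$ for $\rho\geq1$ using the $t>0$ estimate (arguing through complements rather than directly, but equivalently), and invokes Proposition \ref{prop:bighomeo} with the closed balls $\nbar{B}(\rho)$ to conclude that $F_0^{-1}$ is continuous and hence $g=\log\|F_0^{-1}(\cdot)\|$ is continuous on $\rnx$. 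The only differences are cosmetic bookkeeping.
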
\begin{proof}
Given a $\xi\in\rnx$, we have unique $t^\xi\in\R$ and $v^\xi\in S^{n-1}$ such that $\xi=e^{t^\xi(M+A)}v^\xi$. Let $q:\rnx\to S^{n-1}$ be the function $\xi\mapsto v^\xi$, and note that the mapping $\xi\mapsto t^\xi$ is $g:\rnx\to\R$. Now, let $s:\R\times S^{n-1}\to\rnx$ be defined by $s(t,v)=e^tv$, and let $G:\rnx\to\rnx$ be the composition $G(\xi)=s(g(\xi),q(\xi))=e^{t^\xi}v^\xi=:u^\xi$. While $\xi$ ranges over $\rnx$, $g(\xi)$ ranges over all of $\R$ and $q(\xi)$ ranges over all of $S^{n-1}$, and they do so independently of each other (i.e.\ $(t^\xi,v^\xi)$ ranges over all of $\R\times S^{n-1}$). This means that $e^{g(\xi)}$ ranges over all of $\R^+$, and so $G(\xi)$ ranges over all of $\rnx$. Noting that each $\xi\in\rnx$ gives rise to a unique pair $(g(\xi),q(\xi))$ (because $t^\xi$ and $v^\xi$ determine $\xi=e^{t^\xi(M+A)}v^\xi$), and that $s$ is injective, we see that $G$ is also 1-1. So $G$ is a bijection from $\rnx$ to itself. We produce the following diagrams:


\renewcommand{\figurename}{Diagram}

\begin{figure}[h!]
\centering
\begin{subfigure}[b]{0.4\textwidth}
$$\xymatrix@R=7pt {
&\R\ar `d[dr] [dr]^-s&\\
\rnx\ar@/^3pc/[rr]^{G}\ar[ur] |-{g}\ar[dr] |-{q}&&\rnx\\
&S^{n-1}\ar `u[ur] [ur]&}$$
\caption{Mappings between spaces}
\end{subfigure}
\quad
\begin{subfigure}[b]{0.4\textwidth}
$$\xymatrix@R=7pt {
&\;t^\xi\;\ar `d[dr] [dr]^-s&\\
\;\xi\;\ar@/^3pc/[rr]^{G}\ar[ur] |-{g}\ar[dr]|-{q}&&\;u\;\\
&\;v^\xi\;\ar `u[ur] [ur]&}$$
\caption{Corresponding variable names}
\end{subfigure}
\caption{The maps $g$, $q$, $s$, and the composition $G$.}
\end{figure}
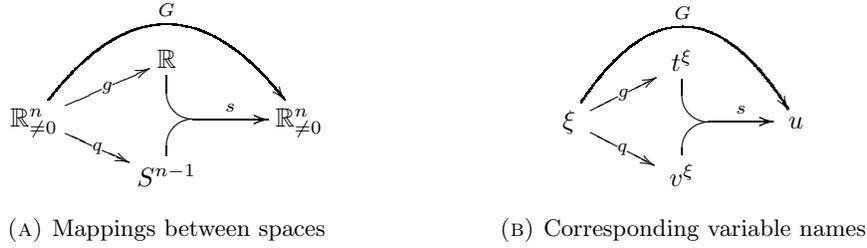


Inverting this process, define $f:\rnx\to\R$ by $f(u)=\log(\|u\|)=:t_u$ and $p:\rnx\to S^{n-1}$ by $p(u)=\dfrac{u}{\|u\|}=:v_u$. Since $\|u\|\neq0$ on $\rnx$, $f$ and $p$ are continuous. Clearly $t\mapsto e^{t(M+A)}$ is a continuous function, and so we have that $r:\R\times S^{n-1}\to\rnx$ defined by $r(t,v)=e^{t(M+A)}v$ is also continuous. So now, letting $F:\rnx\to\rnx$ be the composition $F(u)=r(f(u),p(u))=e^{t_u(M+A)}v_u=:\xi_u$, $F$ is continuous as well. The corresponding diagrams are:

$$\xymatrix@R=7pt {
&\R\ar `d[dl] [dl]_-r&
&
&\;t_u\;\ar `d[dl] [dl]_-r&\\
\rnx&&\rnx\ar[ul] |-{f}\ar[dl] |-{p}\ar@/_3pc/[ll]_{F}
&
\;\xi\;&&\;u\;\ar[ul] |-{f}\ar[dl] |-{p}\ar@/_3pc/[ll]_{F}\\
&S^{n-1}\ar `u[ul] [ul]&
&
&\;v_u\;\ar `u[ul] [ul]&.}$$


Also, it is quite obvious from our definitions that the full diagrams,

\begin{equation}\label{fulldiagram}
\xymatrix@R=7pt {
&\R 
	\ar `d[dr] [dr] |-{s}
	\ar `d[dl] [dl] |-{r}&
&
&\;t\;
	\ar `d[dr] [dr] |-{s}
	\ar `d[dl] [dl] |-{r}&\\
\rnx 
	\ar@/^3pc/[rr] |-{G}
	\ar[ur] |-{g}\ar[dr] |-{q}
&&\rnx 
	\ar[ul] |-{f}\ar[dl] |-{p}
	\ar@/^2.75pc/[ll] |-{F}
&
\;\xi\;
	\ar@/^3pc/[rr] |-{G}
	\ar[ur] |-{g}\ar[dr] |-{q}
&&\;u\;
	\ar[ul] |-{f}\ar[dl] |-{p}
	\ar@/^2.75pc/[ll] |-{F}\\
&S^{n-1} 
	\ar `u[ur] [ur] |-{\hole}
	\ar `u[ul] [ul] |-{\hole}&
&
&\;v\;
	\ar `u[ur] [ur] |-{\hole}
	\ar `u[ul] [ul] |-{\hole}&,}
\end{equation}


\noindent commute. Specifically, it is clear that $F=G^{-1}$.

Note that as $u\to0$, $t_u=\log(\|u\|)\to-\infty$. Now we know that for $t_u\leq 0$ (i.e.\ $\|u\|\leq1$) $\|\xi_u\|=\|e^{t_u(M+A)}v_u\|\leq e^{t_u\lambda_n}\|v_u\|=e^{t_u\lambda_n}$, and so as $u\to 0$ and so $t_u\to-\infty$, we must have that $F(u)=\xi_u$ goes to 0. So $F_0:\R^n\to\R^n$ defined by $F_0(u)=\begin{cases}F(u)&\text{if }u\neq0\\0&\text{if }u=0\end{cases}$ is also a continuous bijection. Note that $F_0$ is the inverse of $G_0:\R^n\to\R^n$ given by $G_0(\xi)=\begin{cases}G(\xi)&\text{if }\xi\neq0\\0&\text{if }\xi=0\end{cases}$.

For any $\rho\in\R^+$, let $B(\rho)=\{y\in\R^n:\|y\|<\rho\}$ and $\nbar{B}(\rho)=\{y\in\R^n:\|y\|\leq \rho\}$ be the open and closed balls, respectively, that have radius $\rho$ and are centered at $0$ in $\R^n$. Recall that the $\nbar{B}(\rho)$ are compact.

Now fix $\rho\geq1$. If $u\notin\nbar{B}(\rho)$, then $\|u\|>\rho\geq1$, so $t_u=\log(\|u\|)>\log(\rho)\geq0$. So,
$$\|F_0(u)\|=\|F(u)\|=\|e^{t_u(M+A)}v_u\|\geq e^{t_u\lambda_n}\|v_u\|>e^{\log(\rho)\lambda_n}=\rho^{\lambda_n}.$$
This shows that if $u\in\R^n-\nbar{B}(\rho)$, then $F_0(u)\in\R^n-\nbar{B}(\rho^{\lambda_n})$, i.e.\ $F_0\left(\R^n-\nbar{B}(\rho)\right)\subset\R^n-\nbar{B}\left(\rho^{\lambda_n}\right)$. Since $F_0$ is a bijection we get
$$F_0\left(\nbar{B}(\rho)\right)=\R^n-F_0\left(\R^n-\nbar{B}(\rho)\right)\supset\nbar{B}\left(\rho^{\lambda_n}\right)\supset B\left(\rho^{\lambda_n}\right).$$

So, given $x\in\R^n$, let $\rho_x=1+\|x\|^{1/\lambda_n}$. Since $\rho_x\geq1$, $B\left(\rho_x^{\lambda_n}\right)\subset F_0\left(\nbar{B}(\rho_x)\right)$. But we also have that $\|x\|=\left(\|x\|^{1/\lambda_n}\right)^{\lambda_n}<\rho_x^{\lambda_n}$ where the last inequality holds because $\rho_x>\|x\|^{1/\lambda_n}$ and $\lambda_n>0$. So $x\in B\left(\rho_x^{\lambda_n}\right)\subset F_0\left(\nbar{B}(\rho_x)\right)$, and since $B\left(\rho_x^{\lambda_n}\right)$ is an open set in $\R^n$, $F_0\left(\nbar{B}(\rho_x)\right)$ contains an open neighborhood of $x$ in $\R^n$.

Note that since $F_0$ is a continuous bijection, $G_0=F_0^{-1}:\R^n\to\R^n$ is an open bijection. We also have that $\B=\{\nbar{B}(\rho):\rho\geq1\}$ is a collection of compact sets with the property that for any $x\in\R^n$, there is some $\nbar{B}(\rho_x)\in\B$ such that $G_0^{-1}\left[\nbar{B}(\rho_x)\right]=F_0\left[\nbar{B}(\rho_x)\right]$ contains an open neighborhood of $x$ in $\R^n$. Since $\R^n$ is Hausdorff, Proposition \ref{prop:bighomeo} thus tells us that $G_0$ is a homeomorphism. Specifically, $G_0$ is continuous, and so $G=G_0|_{\rnx}$ is also continuous. Looking at (\ref{fulldiagram}) we see that $g=f\circ G$, and since we now know that $f$ and $G$ are both continuous, $g$ is continuous as well.
\end{proof}

Returning to the proof of our theorem, fix some $\ph\in L^2(\R)$ with $\|\ph\|_2=1$ that is continuous and is supported in $(-\infty,N]$ for some $N\in\R^+$, such that $\dlim_{t\to-\infty}\ph(t)=0$ (here $\|\cdot\|_2$ denotes the $L^2$ norm). Now define $\hat{\psi}:\R^n\to\C$ by $\hat{\psi}(\xi)=\begin{cases}
\ph(t^\xi)&\text{if }\xi\neq0\\
0&\text{if }\xi=0\end{cases}$. Clearly $\hat{\psi}$ is continuous at any $\xi\neq0$, because it is a composition of continuous functions on $\rnx$. Note that as $\xi\to0$, we have $e^{t^\xi\lambda_1}\leq\left\|e^{t^\xi(M+A)}v^\xi\right\|=\|\xi\|\to 0$, so $t^\xi\to-\infty$, and so $\dlim_{\xi\to0}\hat{\psi}(\xi)=\dlim_{\xi\to0}\ph(t^\xi)=\dlim_{t\to-\infty}\ph(t)=0=\hat{\psi}(0)$, so $\hat{\psi}$ is continuous on all of $\R^n$.

We claim that $\hat{\psi}$ is compactly supported. Note that if $\tx<0$, then $\|\xi\|=\left\|e^{\tx(M+A)}\vx\right\|\leq e^{\tx\lambda_n}<1$. Also if $\tx=0$, then $\|\xi\|=\left\|e^{t^\xi(M+A)}v^\xi\right\|=1$. So if $\tx\leq 0$, then $\xi\in\nbar{B}(1)$. On the other hand, we can consider $\xi$ with $\tx>0$ such that $\hat{\psi}(\xi)\neq0$. For such $\xi$ we have $0\neq\hat{\psi}(\xi)=\ph(\tx)$, so $\tx\leq N$, and so $\|\xi\|=\left\|e^{\tx(M+A)}\vx\right\|\leq e^{\tx\lambda_1}\leq e^{N\lambda_1}$. So letting $R=\max\{1,e^{N\lambda_1}\}$, we have that $\hat{\psi}$ is supported in $\nbar{B}(R)$.

So now, since $\hat{\psi}$ is a compactly supported continuous function on $\R^n$, we have that $\hat{\psi}\in L^2(\R^n)$, and so $\hat{\psi}$ is actually the Fourier transform of some $\psi\in L^2(\R^n)$.

Noting that $M^T$ and $A^T$ satisfy the same criteria we placed on $M$ and $A$, we see that all of our constructions and proofs work for the group $G_{M^T+A^T}$ as well. So let $\psi_T$ be the $\psi$ that we get for the group $G_{M^T+A^T}$. We claim that $\psi_T$ is a wavelet for $G_{M+A}$. For any nonzero $\xi\in\R^n$, let $t_0$ and $v_0$ be the unique elements of $\R$ and $S^{n-1}$, respectively, such that $\xi=e^{t_0(M^T+A^T)}v_0$. Then we have

\begin{align*}
\dint_{G_{M+A}} |\hat{\psi}_T(Q^T\xi)|^2 d\mu(Q)&=\dint_\R \left|\hat{\psi}_T\left(\left(e^{t(M+A)}\right)^T e^{t_0(M^T+A^T)}v_0\right)\right|^2 dt\\
&=\dint_\R \left|\hat{\psi}_T\left(e^{(t+t_0)(M^T+A^T)}v_0\right)\right|^2 dt\\
&=\dint_\R \left|\hat{\psi}_T\left(e^{t(M^T+A^T)}v_0\right)\right|^2 dt\\
&=\dint_\R |\ph(t)|^2 dt=\|\ph\|_2\\
&=1,
\end{align*}
i.e.\ $\psi_T$ satisfies the admissibility condition (\ref{admisscond}) for $G_{M+A}$. So $G_{M+A}$ is admissible.
\end{proof}

\begin{coro}\label{coro:symmpart}
Let $X$ be any $n\times n$ real matrix. If the eigenvalues of the symmetric part of $X$, $\dfrac{X+X^T}{2}$, are all nonzero reals that have the same sign, then $G_X=\{e^{tX}:t\in\R\}$ is admissible.
\end{coro}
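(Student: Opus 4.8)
The plan is to recognize this corollary as an essentially immediate consequence of Theorem \ref{thm:mainthm}, obtained via the canonical decomposition of a matrix into its symmetric and anti-symmetric parts that was already recalled in the introduction. First I would set $M=\frac{X+X^T}{2}$ and $A=\frac{X-X^T}{2}$, so that $X=M+A$, where $M$ is symmetric and $A$ is anti-symmetric. The hypothesis of the corollary is phrased precisely in terms of the eigenvalues of this symmetric part $M$.

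The key observation connecting the two statements is that $M$, being a real symmetric matrix, is orthogonally diagonalizable by the spectral theorem. By hypothesis its eigenvalues are all nonzero reals of the same sign, and $A$ is anti-symmetric by construction. Hence $M$ and $A$ satisfy exactly the conditions imposed on the matrices called $M$ and $A$ in Theorem \ref{thm:mainthm}.

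Having matched the hypotheses, I would simply invoke Theorem \ref{thm:mainthm} to conclude that $G_{M+A}=\{e^{t(M+A)}:t\in\R\}$ is admissible. Since $M+A=X$, this group is exactly $G_X$, which finishes the argument. There is no genuine obstacle here: the only ingredient needed beyond the statement of Theorem \ref{thm:mainthm} is the spectral theorem, which guarantees that a real symmetric matrix is orthogonally diagonalizable and thereby identifies the ``symmetric part'' of $X$ with an orthogonally diagonalizable summand of the form required by the theorem.
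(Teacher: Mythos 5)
Your proposal is correct and is essentially identical to the paper's own proof: both decompose $X$ into its symmetric part $M=\frac{X+X^T}{2}$ and anti-symmetric part $A=\frac{X-X^T}{2}$, invoke the spectral theorem to get orthogonal diagonalizability of $M$, and then apply Theorem \ref{thm:mainthm}. Nothing further is needed.
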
\begin{proof}
Let $M_X=\frac{X+X^T}{2}$, the symmetric part of $X$, and let $A_X=\frac{X-X^T}{2}$, the anti-symmetric part of $X$. Then the spectral theorem tells us that $M_X$ is orthogonally diagonalizable. Since we already know that all of the eigenvalues of $M_X$ are nonzero and have the same sign, and that $A_X$ is anti-symmetric, we can apply Theorem \ref{thm:mainthm}, so $G_X$ is admissible.
\end{proof}

Note: If $M$ is as in Theorem \ref{thm:mainthm}, we can write $M=ODO^T$ with $O$ orthogonal and $D$ diagonal. Then $M^T=(O^T)^TD^TO^T=ODO^T=M$, so $M$ is symmetric. Thus, Corollary \ref{coro:symmpart} still carries the full strength of Theorem \ref{thm:mainthm}.

\begin{lemma}\label{lemma:simadmiss}
Let $X$ and $Y$ be real $n\times n$ matrices that are similar in $M_n(\R)$. If $G_Y=\{e^{tY}:t\in\R\}$ is admissible, then $G_X$ is admissible as well.
\end{lemma}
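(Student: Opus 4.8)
The plan is to transport an admissible wavelet for $G_Y$ to one for $G_X$ by precomposing its Fourier transform with a suitable invertible linear map. Since $X$ and $Y$ are similar, write $X=PYP^{-1}$ for some invertible real $P$. Taking transposes gives $X^T=(P^{-1})^TY^TP^T=(P^T)^{-1}Y^TP^T$, so by property 4 of the matrix exponential, $e^{tX^T}=(P^T)^{-1}e^{tY^T}P^T$ for every $t\in\R$. Setting $S=P^T$, which is invertible, this reads $e^{tX^T}=S^{-1}e^{tY^T}S$. The point to keep in mind is that it is the transpose $P^T$, not $P$ itself, that does the conjugating, precisely because the admissibility condition (\ref{admisscond}) is phrased in terms of $a^T$.

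Next I would record that, exactly as in the proof of Theorem \ref{thm:mainthm}, the left Haar measure on a one-parameter group $\{e^{tZ}:t\in\R\}$ (normalized on $Q_{[0,1]}$) is simply Lebesgue measure $dt$ in the parameter, since left translation by $e^{t_0Z}$ corresponds to the shift $t\mapsto t+t_0$. Thus the admissibility condition (\ref{admisscond}) for $G_X$ becomes $\dint_\R|\hat{\psi}(e^{tX^T}\xi)|^2\,dt=1$ for a.e.\ $\xi\in\rnx$, and likewise for $G_Y$; the two measures correspond because the conjugation $g\mapsto PgP^{-1}$ is a topological group isomorphism from $G_Y$ onto $G_X$ that fixes the parameter $t$.

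Now let $\psi_Y$ be a continuous wavelet for $G_Y$, so that $\dint_\R|\hat{\psi}_Y(e^{tY^T}\eta)|^2\,dt=1$ for a.e.\ $\eta$. I would define $\hat{\psi}_X:=\hat{\psi}_Y\circ S$. A linear change of variables gives $\|\hat{\psi}_X\|_2^2=|\det S|^{-1}\|\hat{\psi}_Y\|_2^2<\infty$, so $\hat{\psi}_X\in L^2(\R^n)$ is the Fourier transform of some $\psi_X\in L^2(\R^n)$. Substituting $e^{tX^T}=S^{-1}e^{tY^T}S$ and using $\hat{\psi}_X=\hat{\psi}_Y\circ S$, I compute
\begin{align*}
\dint_\R\left|\hat{\psi}_X\left(e^{tX^T}\xi\right)\right|^2dt
&=\dint_\R\left|\hat{\psi}_Y\left(SS^{-1}e^{tY^T}S\xi\right)\right|^2dt\\
&=\dint_\R\left|\hat{\psi}_Y\left(e^{tY^T}(S\xi)\right)\right|^2dt=1
\end{align*}
for a.e.\ $\xi$, because $\eta=S\xi$ ranges over a.e.\ point of $\rnx$ as $\xi$ does (the invertible linear bijection $\xi\mapsto S\xi$ preserves null sets). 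Hence $\psi_X$ satisfies (\ref{admisscond}) for $G_X$, so $G_X$ is admissible.

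The computation itself is routine; the only genuine obstacle is the bookkeeping in the first two paragraphs, namely verifying that the conjugating matrix is $S=P^T$ and that the identification of both Haar measures with Lebesgue $dt$ is legitimate. These are exactly the places where the transpose relation $X^T=S^{-1}Y^TS$ and the null-set–preserving change of variables $\xi\mapsto S\xi$ are used, and once they are in hand the transfer of the admissibility integral is immediate.
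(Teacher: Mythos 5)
Your proof is correct and follows essentially the same route as the paper: both transport the wavelet for $G_Y$ to one for $G_X$ by precomposing its Fourier transform with the transposed conjugating matrix, using $(e^{tX})^T=S^{-1}e^{tY^T}S$ and the fact that an invertible linear map preserves null sets so the a.e.\ admissibility condition carries over. Your bookkeeping of the inverses (composing with $S$ so that the conjugation cancels cleanly) is, if anything, slightly more carefully stated than the paper's version, and your remark on identifying both Haar measures with $dt$ matches the paper's normalization.
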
\begin{proof}
Write $X=SYS^{-1}$ with $S\in GL_n(\R)$. Letting $\phi$ be a wavelet for $G_Y$ we have 
\begin{equation}\label{Yadmiss}
\Delta_{\phi,Y}(\xi)=\dint_{G_Y} |\hat{\phi}((e^{tY})^T\xi)|^2 d\mu(e^{tY})=1\text{\quad for a.e. }\xi\in\rnx.
\end{equation}
Letting $P=S^T$ and defining $\psi\in L^2(\R^n)$ by $\hat\psi(\xi)=\hat\phi(P^{-1}\xi)$, we have that 
$$\hat\psi((e^{tX})^T\xi)=\hat\phi\left((S^{-1})^T(e^{tSYS^{-1}})^T\xi\right)=\hat\phi((e^{tY})^T P^{-1}\xi),$$
and so (choosing the Haar measures as in the proof of Theorem \ref{thm:mainthm})
$$
\Delta_{\psi,X}(\xi)=\dint_\R\left|\hat{\psi}\left(\left(e^{tX}\right)^T\xi\right)\right|^2 dt
=\dint_\R\left|\hat{\phi}\left(\left(e^{tY}\right)^T P^{-1}\xi\right)\right|^2 dt
=\Delta_{\phi,Y}(P^{-1}\xi).
$$
Since $\xi\mapsto P^{-1}\xi$ preserves sets of measure zero, this last equation and (\ref{Yadmiss}) give us that $\psi$ is a wavelet for $G_X$, which is thus admissible.
\end{proof}

\begin{coro}\label{coro:complexdiagadmiss}
Let $X$ be any $n\times n$ real matrix. If $X$ is diagonalizable over $\C$ and the real parts of the eigenvalues of $X$ are all nonzero and have the same sign, then $G_X=\{e^{tX}:t\in\R\}$ is admissible.
\end{coro}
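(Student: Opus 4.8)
The plan is to reduce the claim to Theorem \ref{thm:mainthm} by passing to the real canonical form of $X$ and then invoking Lemma \ref{lemma:simadmiss}. First I would recall the standard fact that, because $X$ is a real matrix diagonalizable over $\C$, its non-real eigenvalues occur in complex conjugate pairs $a\pm bi$ with $b\neq 0$, and $X$ is similar \emph{over $\R$} to a block-diagonal matrix $Y$ whose blocks are $1\times 1$ blocks $[\lambda]$ for the real eigenvalues $\lambda$, together with one $2\times 2$ rotation-scaling block $\left[\begin{smallmatrix} a & -b \\ b & a \end{smallmatrix}\right]$ for each conjugate pair $a\pm bi$.

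Next I would decompose $Y = M + A$ by splitting each block into its symmetric and anti-symmetric parts. Each $1\times 1$ block $[\lambda]$ contributes $[\lambda]$ to $M$ and $[0]$ to $A$, while each $2\times 2$ block splits as $\left[\begin{smallmatrix} a & -b \\ b & a \end{smallmatrix}\right] = a I_2 + b\left[\begin{smallmatrix} 0 & -1 \\ 1 & 0 \end{smallmatrix}\right]$, contributing the scalar block $a I_2$ to $M$ and the anti-symmetric rotation generator $b\left[\begin{smallmatrix} 0 & -1 \\ 1 & 0 \end{smallmatrix}\right]$ to $A$. The upshot is that $M$ is a \emph{diagonal} matrix whose diagonal entries are exactly the real parts of the eigenvalues of $X$ (each real $\lambda$ appearing once, each pair's real part $a$ appearing twice), and $A$ is block-diagonal with anti-symmetric blocks, hence anti-symmetric.

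I would then observe that $M$, being diagonal, is trivially orthogonally diagonalizable (one may take the orthogonal matrix to be $I$), and that by hypothesis all of its eigenvalues, namely the real parts of the eigenvalues of $X$, are nonzero and share a common sign. Thus Theorem \ref{thm:mainthm} applies to this $M$ and $A$, giving that $G_Y = G_{M+A}$ is admissible. Finally, since $X$ is similar to $Y$ over $\R$, Lemma \ref{lemma:simadmiss} yields that $G_X$ is admissible as well.

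I do not expect a genuine analytic obstacle here: the corollary is essentially an assembly of the three earlier results. The only point demanding care is the invocation of the real canonical form --- verifying that a $\C$-diagonalizable real matrix is $\R$-similar to the block-diagonal form described above, and checking that the symmetric part of each $2\times 2$ block is precisely $a I_2$, so that the spectrum of $M$ consists exactly of the real parts of the eigenvalues of $X$ and the hypothesis transfers cleanly to the setting of Theorem \ref{thm:mainthm}.
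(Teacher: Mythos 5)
Your proposal is correct and follows essentially the same route as the paper: pass to the real block-diagonal form with rotation-scaling blocks, observe that its symmetric part is the diagonal matrix of real parts of the eigenvalues, apply Theorem \ref{thm:mainthm} (the paper invokes the equivalent Corollary \ref{coro:symmpart}), and transfer back via Lemma \ref{lemma:simadmiss}. The only cosmetic difference is that you cite the real canonical form as a standard fact, whereas the paper derives it explicitly by conjugating each conjugate pair with $B=\left[\begin{smallmatrix}1&1\\ i&-i\end{smallmatrix}\right]$ and then noting that real matrices similar over $\C$ are similar over $\R$.
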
\begin{proof}
Note that if $X$ has a complex eigenvalue $\lambda$, since $X$ is real it also has $\nbar\lambda$ as an eigenvalue. So we can write the list of eigenvalues of $X$ (including multiplicities), as $\lambda_1,\nbar\lambda_1,\cdots,\lambda_k,\nbar\lambda_k,\mu_{2k+1},\cdots,\mu_n$ with the $\lambda_j=a_j+ib_j$ ($b_j\neq 0$) and the $\mu_l$ real. So using conjugation by matrices in $GL_n(\C)$, $X$ is similar to the diagonal matrix $D$ with this list of entries down its diagonal. But letting $B=\begin{bmatrix}1&1\\i&-i\end{bmatrix}$, we have $B^{-1}=\frac{1}{2}\begin{bmatrix}1&-i\\1&i\end{bmatrix}$ and $B\begin{bmatrix}\lambda_j&0\\0&\nbar\lambda_j\end{bmatrix}B^{-1}=\begin{bmatrix}a_j&b_j\\-b_j&a_j\end{bmatrix}$. So letting $C=\underbrace{B\oplus\cdots\oplus B}_{k}\oplus I_{n-2k}$, we have that $X$ is similar, over $\C$, to 
$$A:=CDC^{-1}=\begin{bmatrix}
a_1&b_1\\
-b_1&a_1\\
&&\ddots\\
&&&a_k&b_k\\
&&&-b_k&a_k\\
&&&&&\mu_{2k+1}\\
&&&&&&\ddots\\
&&&&&&&\mu_n
\end{bmatrix}.$$
But two real matrices that are similar in $M_n(\C)$ are also similar in $M_n(\R)$ (because they have the same rational canonical form). Now it is clear that the symmetric part of $A$ is the diagonal matrix with entries (and so eigenvalues) $a_1,a_1,\ldots,a_k,a_k,\mu_{2k+1},\ldots,\mu_n$, which by hypothesis are all nonzero and have the same sign. So by Corollary \ref{coro:symmpart} $G_A$ is admissible, and so the previous lemma gives us that $G_X$ is admissible.
\end{proof}

\section{Further Results on One-Parameter Groups}

\ \indent\! The main theorem and its corollaries provide useful and easy to apply tools for proving that one-parameter groups are admissible. On the other hand, they do not give us any way to show that a one-parameter group is not admissible. We start this section by showing exactly which diagonal matrices generate admissible groups. This and the results of the previous section will then be used to give a characterization of the admissible one-parameter groups over $\R^2$.\\

\begin{prop}\label{prop:diagadmiss}
Let $D$ be an $n\times n$ real diagonal matrix. Then $G_D$ is admissible if and only if $\tr(D)\neq0$.
\end{prop}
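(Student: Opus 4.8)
The plan is to prove both implications by passing to logarithmic coordinates on an orthant, where the one-parameter flow becomes a translation and the whole question reduces to a one-dimensional integral transverse to a hyperplane. As in the proof of Theorem \ref{thm:mainthm}, I normalize the Haar measure so that $\dint_{G_D}\cdots\,d\mu$ becomes $\dint_\R\cdots\,dt$ with $e^{tD}=\mathrm{diag}(e^{t\lambda_1},\dots,e^{t\lambda_n})$; since $e^{tD}$ is symmetric, $(e^{tD})^T=e^{tD}$, so admissibility of $G_D$ amounts to the existence of $\psi\in L^2(\R^n)$ with $\dint_\R|\hat\psi(e^{tD}\xi)|^2\,dt=1$ for a.e. $\xi$. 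Writing $\lambda=(\lambda_1,\dots,\lambda_n)$, I record the key identity $\inner{\lambda}{\mathbf 1}=\tr(D)$ (where $\mathbf 1=(1,\dots,1)$), and I discard the degenerate case $D=0$ (then $G_D=\{I\}$ is trivially inadmissible, consistent with $\tr(D)=0$).

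Since $e^{tD}$ preserves each open orthant and the set where some coordinate vanishes is null, it suffices to work on the positive orthant $P=(0,\infty)^n$ and handle the remaining orthants identically by symmetry. On $P$ put $y_i=\log\xi_i$; then the flow becomes the translation $y\mapsto y+t\lambda$ and $d\xi=e^{\inner{y}{\mathbf 1}}\,dy$. Decomposing $y=u\hat\lambda+w$ with $\hat\lambda=\lambda/\|\lambda\|$ and $w\in\lambda^\perp$, the admissibility condition becomes $\dint_\R \tilde F(u\hat\lambda+w)\,du=\|\lambda\|$ for a.e. $w$, where $\tilde F(y)=|\hat\psi(e^{y_1},\dots,e^{y_n})|^2$, while $\hat\psi\in L^2(P)$ reads $\dint_{\lambda^\perp}\dint_\R \tilde F(u\hat\lambda+w)\,e^{\inner{y}{\mathbf 1}}\,du\,dw<\infty$. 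The crucial observation is that $\inner{y}{\mathbf 1}=u\,\tfrac{\tr(D)}{\|\lambda\|}+\inner{w}{\mathbf 1}$, so the volume weight varies along the orbit direction $u$ exactly when $\tr(D)\neq0$; this single fact drives both directions.

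For the \textbf{if} direction ($\tr(D)\neq0$) I build $\tilde F$ by prescribing a fixed unit-mass profile along each orbit but \emph{shifting} it transversally: set $\tilde F(u\hat\lambda+w)=\|\lambda\|\,\rho(u-m(w))$ with $\rho\geq0$ compactly supported and $\dint_\R\rho=1$, which forces the orbit integral to equal $\|\lambda\|$ for every $w$. The shift $m(w)$ is then chosen, using $c:=\tr(D)/\|\lambda\|\neq0$, so that $c\,m(w)+\inner{w}{\mathbf 1}=-\|w\|^2$; the $L^2$ integral collapses to $\|\lambda\|\big(\dint_\R\rho(v)e^{cv}\,dv\big)\dint_{\lambda^\perp}e^{-\|w\|^2}\,dw<\infty$. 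Taking $\hat\psi=\sqrt{\tilde F}\circ\log$ on $P$ and copying it to the other orthants yields $\hat\psi\in L^2(\R^n)$ with $\dint_\R|\hat\psi(e^{tD}\xi)|^2\,dt=1$ a.e.; its inverse Fourier transform is the desired wavelet.

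For the \textbf{only if} direction I argue the contrapositive: if $\tr(D)=0$ then $\lambda\perp\mathbf 1$, so $\inner{y}{\mathbf 1}=\inner{w}{\mathbf 1}$ is constant along orbits. Were some $\psi$ a wavelet, the orbit integral would equal $\|\lambda\|$ for a.e. $w$, so the $L^2$ integral over $P$ would factor as $\|\lambda\|\dint_{\lambda^\perp}e^{\inner{w}{\mathbf 1}}\,dw$. Since $\tr(D)=0$ and $D\neq0$ force $\mathbf 1\in\lambda^\perp\setminus\{0\}$, the functional $w\mapsto\inner{w}{\mathbf 1}$ is nonzero on $\lambda^\perp$ and the transverse integral diverges, so $\hat\psi\notin L^2$, a contradiction. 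The main obstacle, and the heart of the matter, is exactly this reconciliation of the forced constant orbit-integral with square-integrability: the construction succeeds in the \textbf{if} case only because a nonzero trace makes the volume weight genuinely vary along orbits (so a transverse shift can restore decay), and it must fail in the \textbf{only if} case for the complementary reason that a zero trace aligns the orbits with the level sets of the weight.
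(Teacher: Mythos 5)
Your proof is correct and is essentially the paper's argument in different clothing: both pass to orbit coordinates in which the flow is straightened and the volume element carries a factor $e^{t\,\mathrm{tr}(D)}$ along the orbit direction, deduce non-admissibility when $\mathrm{tr}(D)=0$ because the forced constant orbit-integral then makes $\|\hat\psi\|_2^2$ a divergent integral over the (non-compact) orbit space, and when $\mathrm{tr}(D)\neq0$ build $\hat\psi$ by sliding a unit-mass profile along each orbit by an amount depending on the transverse coordinate so as to gain integrability. The only differences are cosmetic --- you use logarithmic coordinates on an orthant and an orthogonal splitting along $\lambda$ with a shift $m(w)$ producing Gaussian decay, where the paper uses the section $\{u_n=1\}$ of the half-space $\xi_n>0$ and the shift $\tau\in[-(|u|+1),-|u|]$.
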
\begin{proof}
If $D=0$, then $G_D$ is the trivial group which is obviously not admissible. So writing
$D=\begin{bmatrix}
d_1&&\\
&\ddots&\\
&&d_n
\end{bmatrix}$, we need only consider when some $d_l\neq 0$, and we assume without loss of generality that $d_n\neq 0$. Now consider the mapping $g(u_1,\ldots,u_{n-1},\tau)=e^{\tau D}\begin{bmatrix}u_1\\\vdots\\u_{n-1}\\1\end{bmatrix}
=\begin{bmatrix}u_1e^{\tau d_1}\\\vdots\\u_{n-1}e^{\tau d_{n-1}}\\e^{\tau d_n}\end{bmatrix}$ 
which is clearly a coordinate transform from $\R^n$ onto the ``upper half-space'' $\R^{n-1}\times\R^+$. For $i=1,\ldots,n-1$ we have $\dfrac{\partial g}{\partial u_i}=\begin{bmatrix}0\\\vdots\\e^{\tau d_i}\\\vdots\\0\end{bmatrix}$ where the unique nonzero entry in in the $i^\text{th}$ position, and
$\dfrac{\partial g}{\partial\tau}=\begin{bmatrix}d_1u_1e^{\tau d_1}\\\vdots\\d_{n-1}u_{n-1}e^{\tau d_{n-1}}\\d_ne^{\tau d_n}\end{bmatrix}$. Thus the Jacobian matrix of $g$ is 
$$\begin{bmatrix}
e^{\tau d_1}&0&\cdots&0&d_1u_1e^{\tau d_1}\\
0&e^{\tau d_2}&\cdots&0&d_2u_2e^{\tau d_2}\\
\vdots&\vdots&\ddots&\vdots&\vdots\\
0&0&\cdots&e^{\tau d_{n-1}}&d_{n-1}u_{n-1}e^{\tau d_{n-1}}\\
0&0&\cdots&0&d_ne^{\tau d_n}
\end{bmatrix}$$
and so the Jacobian determinant is 
$$J_g(u_1,\ldots,u_{n-1},\tau)=e^{\tau d_1}e^{\tau d_2}\cdots d_ne^{\tau d_n}=d_n e^{\tau(d_1+\cdots+d_n)}=d_ne^{\tau\tr(D)}.$$

Now suppose $\tr(D)=0$ and let $\psi$ be any function satisfying condition (\ref{admisscond}) for $G_{D}$ with Haar measure chosen as in the beginning of the proof of Theorem \ref{thm:mainthm}. Letting $u=\begin{bmatrix}u_1\\\vdots\\u_{n-1}\\1\end{bmatrix}$, we have that every orbit of the action of $G_D$ on the upper half-space contains a unique such $u$. Thus, the fact that $\Delta_\psi(\xi)=1$ for a.e.\ $\xi\in\R^n$ implies that $\Delta_\psi(u)=1$ for a.e.\ such $u$. So defining $du=du_1\cdots du_{n-1}$ we get
\begin{align*}
\|\psi\|_2^2&=\|\hat{\psi}\|_2^2\geq\dint_{\R^{n-1}\times\R^+}|\hat{\psi}(\xi)|^2d\xi\\
&=\dint_{\R^n}|\hat{\psi}\circ g(u_1,\ldots,u_{n-1},\tau)|^2|J_g(u_1,\ldots,u_{n-1},\tau)|\ du_1\cdots du_{n-1}d\tau\\
&=\dint_{\R^n}|\hat{\psi}(e^{\tau D}u)|^2|d_ne^{\tau\tr(D)}|\ du d\tau
=|d_n|\dint_{\R^{n-1}}\left(\dint_\R|\hat{\psi}(e^{\tau D}u)|^2\ d\tau\right)\ du\\
&=|d_n|\dint_{\R^{n-1}}\Delta_\psi(u)\ du=|d_n|\dint_{\R^{n-1}}du=\infty,
\end{align*}
so $\psi\notin L^2(\R^n)$. Thus, $G_D$ is not admissible.

Now say $\tr(D)\neq 0$. Since $G_D=G_{-D}$, we may assume $\tr(D)>0$. Let $u'=\begin{bmatrix}u_1\\\vdots\\u_{n-1}\\1\end{bmatrix}$, $u=\begin{bmatrix}u_1\\\vdots\\u_{n-1}\end{bmatrix}$, and let $|u|=\sqrt{u_1^2+\cdots+u_{n-1}^2}$, the euclidean norm of $u$ in $\R^{n-1}$. Define a region $R$ in $\R^n$ by $R=\{(u_1,\ldots,u_{n-1},\tau):\tau\in[-(|u|+1),-|u|]\}$, and let $\hat\psi$ be the characteristic function of 
$$S=\{\xi=(\xi_1,\ldots,\xi_n):(\xi_1,\ldots,\xi_{n-1},\pm\xi_n)\in g(R)\}.$$
Letting $V_{n-1}(r)$ denote the volume of the ball of radius $r$ in $\R^{n-1}$, we have
\begin{align*}
\|\hat{\psi}\|_2^2&=\dint_{S}d\xi=2\dint_{g(R)}d\xi
=2\dint_{R}|J_g(u_1,\ldots,u_{n-1},\tau)|\ du_1\cdots du_{n-1}d\tau\\
&=2\dint_{R}|d_ne^{\tau\tr(D)}|\ du d\tau
=2|d_n|\dint_{\R^{n-1}}\left(\dint_{-(|u|+1)}^{-|u|}e^{\tau\tr(D)}\ d\tau\right)\ du\\
&=2|d_n|\dint_{\R^{n-1}}\left[\frac{e^{-|u|\tr(D)}-e^{-(|u|+1)\tr(D)}}{\tr(D)}\right]\ du\\
&<\frac{2|d_n|}{\tr(D)}\dint_{\R^{n-1}}e^{-|u|\tr(D)}\ du
=\frac{2|d_n|}{\tr(D)}\dint_0^\infty V_{n-1}(r)e^{-r\tr(D)}\ dr\\
&=\frac{2|d_n|V_{n-1}(1)}{\tr(D)}\dint_0^\infty r^{n-1}e^{-r\tr(D)}\ dr\\
&=\frac{2|d_n|V_{n-1}(1)}{\tr(D)^{n+1}}\dint_0^\infty y^{n-1}e^{-y}\ dy
=\frac{2|d_n|V_{n-1}(1)\Gamma(n)}{\tr(D)^{n+1}}<\infty
\end{align*}
so $\hat{\psi}$ is in $L^2(\R^n)$, so it is actually the Fourier transform of some $\psi\in L^2(\R^n)$. We claim that $\psi$ is a wavelet for $G_D$.

By the definition of $S$, it is clear that it suffices to show that $\Delta_\psi(\xi)=1$ for \ $\xi\in\R^{n-1}\times\R^+$. Given such a $\xi$, let $\alpha=\ln(\xi_n)$, and let $v=e^{-\alpha D}\xi$. Then we have that $v$ is of the form $v=\begin{bmatrix}v_1\\\vdots\\v_{n-1}\\1\end{bmatrix}$, so
\begin{align*}
\Delta_\psi(\xi)&=\dint_\R |\hat{\psi}(e^{tD}\xi)|^2 dt
=\dint_\R |\hat{\psi}(e^{(t+\alpha)D}v)|^2 dt
=\dint_\R |\hat{\psi}(e^{tD}v)|^2 dt\\
&=\dint_\R |\hat{\psi}\circ g(v_1,\ldots,v_{n-1},t)|^2 dt=1
\end{align*}
because for fixed $u_1,\ldots,u_{n-1}$, the function $\hat\psi\circ g(u_1,\ldots,u_{n-1},\cdot)$ is the characteristic function of an interval of length 1. Thus $\psi$ is a wavelet for $G_D$, so $G_D$ is admissible.
\end{proof}

\begin{coro}\label{diagableadmiss}
Let $X$ be an $n\times n$ real diagonalizable matrix. Then $G_X$ is admissible if and only if $\tr(X)\neq0$.
\end{coro}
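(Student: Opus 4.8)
The plan is to reduce immediately to the diagonal case settled in Proposition~\ref{prop:diagadmiss}, exploiting the similarity-invariance of admissibility furnished by Lemma~\ref{lemma:simadmiss}. Since $X$ is diagonalizable over $\R$, I would begin by writing $X=SDS^{-1}$ with $S\in GL_n(\R)$ and $D$ a real diagonal matrix whose diagonal entries are the (real) eigenvalues of $X$. Because the trace is invariant under conjugation, $\tr(X)=\tr(D)$, so the hypothesis $\tr(X)\neq0$ is literally the same as the condition $\tr(D)\neq0$.

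The key observation is that admissibility is a similarity invariant, even though Lemma~\ref{lemma:simadmiss} is phrased as a one-way implication. Indeed, the relation ``similar in $M_n(\R)$'' is symmetric: from $X=SDS^{-1}$ we also have $D=S^{-1}XS$. Applying Lemma~\ref{lemma:simadmiss} once, with $D$ playing the role of $Y$, shows that admissibility of $G_D$ forces admissibility of $G_X$; applying it a second time with the roles of $X$ and $D$ interchanged shows the converse. Hence $G_X$ is admissible if and only if $G_D$ is admissible.

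Finally, Proposition~\ref{prop:diagadmiss} tells us that the diagonal group $G_D$ is admissible if and only if $\tr(D)\neq0$. Chaining these equivalences gives that $G_X$ is admissible if and only if $\tr(D)\neq0$, which by the first step is exactly $\tr(X)\neq0$, completing the argument. There is essentially no hard step here; the only point that requires care is recognizing that Lemma~\ref{lemma:simadmiss}, although stated as an implication, yields a genuine equivalence precisely because matrix similarity is a symmetric relation, so the single lemma may be invoked in both directions. (It is also worth flagging, though not needed for the proof, that ``diagonalizable'' must be read as diagonalizable over $\R$: this is what guarantees that $D$ is a real diagonal matrix to which Proposition~\ref{prop:diagadmiss} applies, and it is why the bare trace condition suffices, whereas the genuinely complex case is instead governed by the sharper hypothesis of Corollary~\ref{coro:complexdiagadmiss}.)
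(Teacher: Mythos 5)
Your proposal is correct and follows essentially the same route as the paper: conjugate $X$ to a real diagonal matrix, use Lemma~\ref{lemma:simadmiss} (applied in both directions via the symmetry of similarity, a point the paper leaves implicit) to transfer admissibility, and finish with Proposition~\ref{prop:diagadmiss} together with the conjugation-invariance of the trace. No gaps.
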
\begin{proof}
Because $X$ is diagonalizable, we can write $X=CDC^{-1}$ for some $C\in GL_n(\R)$ and diagonal matrix $D$. So, by Lemma \ref{lemma:simadmiss}, $G_X$ is admissible if and only if $G_D$ is admissible. Proposition \ref{prop:diagadmiss} tells us that $G_D$ is admissible if and only if $\tr(D)\neq0$. Since $\tr(X)=\tr(CDC^{-1})=\tr(D)$, we see that $G_X$ is admissible if and only if $\tr(X)\neq0$.
\end{proof}

\begin{thm}
Let $X$ be a $2\times 2$ real matrix. Then $G_X$ is admissible if and only if $\tr(X)\neq0$.
\end{thm}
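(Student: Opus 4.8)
The plan is to classify $2\times 2$ real matrices by the nature of their eigenvalues and to reduce every case to a result already established, using that admissibility is a similarity invariant. Indeed, Lemma \ref{lemma:simadmiss} shows that if $X$ and $Y$ are similar over $\R$ and $G_Y$ is admissible then so is $G_X$; since similarity is symmetric, $G_X$ is admissible if and only if $G_Y$ is. I would record this at the outset, so that throughout the argument $X$ may be replaced by any real matrix to which it is similar. Recall also that the eigenvalue type of $X$ is governed by the discriminant $\tr(X)^2-4\det(X)$, so every $X$ falls into one of: diagonalizable over $\R$ (distinct real eigenvalues, or a scalar matrix), complex conjugate eigenvalues $a\pm bi$ with $b\neq 0$, or a defective matrix with a single repeated real eigenvalue $\lambda=\tr(X)/2$.

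For the ``if'' direction, assume $\tr(X)\neq 0$ and treat the three types in turn. If $X$ is diagonalizable over $\R$, then Corollary \ref{diagableadmiss} gives admissibility directly from $\tr(X)\neq 0$. If $X$ has eigenvalues $a\pm bi$ with $b\neq 0$, then $a=\tr(X)/2\neq 0$; the two eigenvalues have equal (hence same-signed, nonzero) real parts, so $X$ is admissible by Corollary \ref{coro:complexdiagadmiss}. The remaining type is the defective case, which is \emph{not} covered by the diagonalizability corollaries and is where the main work lies. Here $X$ is similar to a Jordan block, and by the conjugation $\mathrm{diag}(s,1)$ this block is in turn similar, for every $s>0$, to $J_s=\left(\begin{smallmatrix}\lambda & s\\ 0 & \lambda\end{smallmatrix}\right)$ with $\lambda=\tr(X)/2\neq 0$. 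The symmetric part of $J_s$ has eigenvalues $\lambda\pm s/2$, so choosing $0<s<2|\lambda|$ makes both of them nonzero of the same sign as $\lambda$. Corollary \ref{coro:symmpart} then shows $G_{J_s}$ is admissible, and Lemma \ref{lemma:simadmiss} transfers this back to $G_X$.

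For the ``only if'' direction, assume $\tr(X)=0$; the eigenvalues are $\pm\lambda$. If $X$ is diagonalizable over $\R$ (distinct real $\pm\lambda$ with $\lambda\neq 0$, or the zero matrix), Corollary \ref{diagableadmiss} immediately yields non-admissibility. The two cases outside the diagonalizable framework are the second obstacle: by the reduction above, $X$ is similar either to the rotation generator $R=\left(\begin{smallmatrix}0 & b\\ -b & 0\end{smallmatrix}\right)$ with $b\neq 0$ (purely imaginary eigenvalues) or to the shear generator $N=\left(\begin{smallmatrix}0 & 1\\ 0 & 0\end{smallmatrix}\right)$ (nonzero nilpotent). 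I would dispatch these by directly examining $\Delta_\psi$. For $R$, the matrix $R^T=-R$ is again skew-symmetric, so $(e^{tR})^T=e^{tR^T}$ is a rotation and $t\mapsto |\hat\psi(e^{tR^T}\xi)|^2$ is a nonnegative periodic function of $t$; its integral over $\R$ is therefore $0$ or $\infty$, never $1$, so $\Delta_\psi(\xi)\neq 1$ and no wavelet can exist. For $N$, one computes $e^{tN^T}\xi=(\xi_1,\,t\xi_1+\xi_2)$, and for $\xi_1\neq 0$ the substitution $u=t\xi_1+\xi_2$ gives $\Delta_\psi(\xi)=\frac{1}{|\xi_1|}\int_\R|\hat\psi(\xi_1,u)|^2\,du$, a quantity depending only on $\xi_1$. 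Were this equal to $1$ for a.e.\ $\xi$, then $\int_\R|\hat\psi(\xi_1,u)|^2\,du=|\xi_1|$ for a.e.\ $\xi_1$, and integrating in $\xi_1$ would force $\|\hat\psi\|_2^2=\int_\R|\xi_1|\,d\xi_1=\infty$, contradicting $\psi\in L^2(\R^2)$. Thus $G_N$ is not admissible either, completing the characterization.

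The steps I expect to be the genuine obstacles are precisely the two non-diagonalizable situations, since these escape Corollaries \ref{diagableadmiss}--\ref{coro:complexdiagadmiss}. In the admissible direction the difficulty is resolved by the observation that conjugation can shrink the off-diagonal entry of a Jordan block relative to its eigenvalue, bringing the symmetric part into the regime of Corollary \ref{coro:symmpart}; in the non-admissible direction it is resolved by the explicit integral computations for the rotation and shear, which are short but must be done by hand rather than quoted.
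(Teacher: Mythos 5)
Your case division and almost every step coincide with the paper's own proof: Corollary \ref{diagableadmiss} for the real-diagonalizable case, Corollary \ref{coro:complexdiagadmiss} for complex eigenvalues with nonzero real part, a rescaled Jordan block whose symmetric part falls under Corollary \ref{coro:symmpart} (the paper takes $s=\lambda$ where you allow any $0<s<2|\lambda|$), and a change-of-variables computation for the nilpotent case that is equivalent to the paper's Jacobian argument. The one place where your argument genuinely fails is the purely imaginary case, i.e.\ $X$ similar to $R=\left(\begin{smallmatrix}0&b\\-b&0\end{smallmatrix}\right)$. Here $G_X=\{e^{tR}:t\in\R\}$ is the compact group $SO(2)$, and the admissibility condition (\ref{admisscond}) is an integral over the closed subgroup $G$ against a Haar measure \emph{of $G$} --- which for a compact group is finite, i.e.\ $\int_0^{2\pi}|\hat\psi(R_\theta^T\xi)|^2\,d\theta$ up to normalization, not $\int_\R|\hat\psi\bigl((e^{tR})^T\xi\bigr)|^2\,dt$. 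Your observation that a nonnegative periodic function integrates over $\R$ to $0$ or $\infty$ is true but addresses the wrong integral: Lebesgue measure on the parameter $t$ does not push forward to a Haar measure on $SO(2)$, because $t\mapsto e^{tR}$ is $2\pi/b$-periodic and counts each group element infinitely often. With the correct finite Haar measure, $\Delta_\psi(\xi)=1$ can perfectly well hold at individual points, so no contradiction arises pointwise.

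The conclusion is nevertheless correct, and the repair is short (it is what the paper does): the substitution $\theta\mapsto\theta+\ph$ shows $\Delta_\psi(\xi)$ depends only on $\rho=\|\xi\|$, so $\Delta_\psi\equiv1$ forces $\int_0^{2\pi}|\hat\psi(\rho e^{i\theta})|^2\,d\theta$ to equal a fixed positive constant for a.e.\ $\rho$, whence $\|\hat\psi\|_2^2=\int_0^\infty\rho\int_0^{2\pi}|\hat\psi(\rho e^{i\theta})|^2\,d\theta\,d\rho$ is a positive multiple of $\int_0^\infty\rho\,d\rho=\infty$, contradicting $\psi\in L^2(\R^2)$. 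In other words, the obstruction in the rotation case is not that the dilation integral diverges but that rotation invariance of $\Delta_\psi$ is incompatible with square-integrability; replace the periodicity remark with this radial computation and the rest of your proof stands as written.
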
\begin{proof}
We consider cases as to the diagonalizability of $X$.

Case 1: $X$ is diagonalizable (in $M_n(\R)$): This is just the 2-dimensional case of Corollary \ref{diagableadmiss}.

Case 2: $X$ is diagonalizable in $M_n(\C)$, but not in $M_n(\R)$: Let $\lambda=a+bi$ and $\bar\lambda=a-bi$ be the two eigenvalues of $X$. Since $\tr(X)=2a$, $\tr(X)\neq0$ if and only if $a\neq 0$. If $a\neq 0$, then by Corollary \ref{coro:complexdiagadmiss}, $G_X$ is admissible. So say $a=0$. Letting $B=\begin{bmatrix}1&1\\i&-i\end{bmatrix}$, we have $B^{-1}=\frac{1}{2}\begin{bmatrix}1&-i\\1&i\end{bmatrix}$ and we see that $X$ is similar to $B\begin{bmatrix}\lambda&0\\0&\bar\lambda\end{bmatrix}B^{-1}=\begin{bmatrix}0&b\\-b&0\end{bmatrix}$ in $M_n(\C)$, and so also in $M_n(\R)$. By Lemma \ref{lemma:simadmiss} we therefore only need to consider the case $X=\begin{bmatrix}0&b\\-b&0\end{bmatrix}$. Moreover, since $G_X=G_{\frac{1}{b}X}$ (by reparameterizing $t$ as $tb$), it suffices to consider $X=\begin{bmatrix}0&1\\-1&0\end{bmatrix}$.

Note that multiplying a vector $\begin{bmatrix}a\\b\end{bmatrix}$ by the matrix $X=\begin{bmatrix}0&1\\-1&0\end{bmatrix}$ gives the same result as viewing the vector as the complex number $a+bi$ and multiplying by $-i$ (note that if we use $X^T$, this changes to $i$). So we are really considering the group $\{e^{i\theta}:\theta\in\R\}=\{e^{i\theta}:\theta\in[0,2\pi)\}=SO(2)$ (this can also be verified by computing the infinite series, using the fact that $X^4=\begin{bmatrix}1&0\\0&1\end{bmatrix}$). Now if $\psi$ is a function satisfying condition (\ref{admisscond}), and we represent a $\xi\neq0$ as $\xi=\rho e^{i\ph}$, then we have
$$1=\Delta_\psi(\xi)=\dint_0^{2\pi}\left|\hat{\psi}\left(e^{i\theta}\rho e^{i\ph}\right)\right|^2 d\theta
=\dint_0^{2\pi}\left|\hat{\psi}\left(\rho e^{i(\theta+\ph)}\right)\right|^2 d\theta
=\dint_0^{2\pi}\left|\hat{\psi}\left(\rho e^{i\theta}\right)\right|^2 d\theta $$
because $e^{i\theta}$ is $2\pi$ periodic and $d\theta$ is a Haar measure for $\R$, and so
$$\|\psi\|_2^2=\|\hat{\psi}\|_{2}^2=\dint_{\R^2-\{0\}}\left|\hat{\psi}(\xi)\right|^2d\xi
=\int_0^\infty \rho \dint_0^{2\pi} \left|\hat{\psi}\left(\rho e^{i\theta}\right)\right|^2 d\theta d\rho
=\int_0^\infty \rho d\rho
=\infty,$$
so $\psi\notin L^2(\R^2)$. Thus, $G_X$ is not admissible.

Case 3: $X$ is not diagonalizable in $M_n(\C)$: In this case, $X$ is similar to its Jordan normal form, $\begin{bmatrix}\lambda&1\\0&\lambda\end{bmatrix}$. So $\tr(X)=2\lambda$, and $\tr(X)\neq0$ iff $\lambda\neq0$. If $\lambda\neq 0$, then $X$ is similar to $Y=
\begin{bmatrix}1&0\\0&\frac{1}{\lambda}\end{bmatrix}
\begin{bmatrix}\lambda&1\\0&\lambda\end{bmatrix}
\begin{bmatrix}1&0\\0&\lambda\end{bmatrix}=\begin{bmatrix}\lambda&\lambda\\0&\lambda\end{bmatrix}$, so Lemma \ref{lemma:simadmiss} gives us that $G_X$ is admissible if $G_Y$ is. Now the symmetric part of $Y$ is $M_Y=\begin{bmatrix}\lambda&\lambda/2\\\lambda/2&\lambda\end{bmatrix}$, which has determinant $\det(M_Y)=\lambda^2-\frac{\lambda^2}{4}=\frac{3\lambda^2}{4}$. Since this determinant is positive, it follows that the eigenvalues of $M_Y$, the product of which is the determinant, have the same sign. So by Corollary \ref{coro:symmpart}, $G_Y$ is admissible, so $G_X$ is admissible.

If $\lambda=0$, then $X$ is similar to $\begin{bmatrix}0&1\\0&0\end{bmatrix}$, so by Lemma \ref{lemma:simadmiss}, it suffices to suppose $X=\begin{bmatrix}0&1\\0&0\end{bmatrix}$. Let $Y=X^T$, and note that because $Y^2=0$, $e^{tY}=\begin{bmatrix}1&0\\t&1\end{bmatrix}$. Define the mapping $g:(\R-\{0\})\times\R\to(\R-\{0\})\times\R$ by $g(u,t)=e^{tY}\begin{bmatrix}u\\0\end{bmatrix}=\begin{bmatrix}u\\tu\end{bmatrix}$. We have that $\dfrac{\partial g}{\partial u}=\begin{bmatrix}1\\t\end{bmatrix}$ and $\dfrac{\partial g}{\partial t}=\begin{bmatrix}0\\1\end{bmatrix}$, so the Jacobian matrix of $g$ is $\begin{bmatrix}1&0\\t&1\end{bmatrix}$, and the Jacobian determinant of $g$ is $J_g(u,t)=1$. Note that the image of $g$ is almost all of $\R^2$, and this means that for almost all $\xi\in\R^2$, there is a unique $u$ such that $\begin{bmatrix}u\\0\end{bmatrix}$ and $\xi$ are in the same orbit of the action of $\{e^{tY}:t\in\R\}$ on $\R^2$. Further, for a fixed $\xi$ in the image, the corresponding $u$ is $u=\xi_1$, and so the mapping $\xi\to u$ preserves sets of measure 0. 
Thus, if $\psi $ is a function satisfying condition (\ref{admisscond}), then since the integral is over orbits of $\{e^{tY}:t\in\R\}$, $\Delta_\psi\left(\begin{bmatrix}u\\0\end{bmatrix}\right)=1$ for a.e.\ $u$. For such a $\psi$,
\begin{align*}
\|\psi\|_2^2&=\|\hat{\psi}\|_2^2=\dint_{(\R-\{0\})\times\R}\!\!\!\!\!\!\!\!\!\!\!\!\!|\hat{\psi}(\xi)|^2\ d\xi
=\dint_{(\R-\{0\})\times\R}\!\!\!\!\!\!\!\!\!\!\!\!\!|\hat{\psi}\circ g(u,t)|^2|J_g(u,t)|\ du dt\\
&=\dint_{\R-\{0\}}\dint_\R\left|\hat{\psi}\left(e^{tY}\begin{bmatrix}u\\0\end{bmatrix}\right)\right|^2\ dt\ du
=\dint_{\R-\{0\}}\Delta_\psi\left(\begin{bmatrix}u\\0\end{bmatrix}\right)\ du\\
&=\dint_{\R-\{0\}}du=\infty,
\end{align*}
so $\psi\notin L^2(\R^2)$, showing that $G_X$ is not admissible.
\end{proof}

\bibliographystyle{plain}
\bibliography{oneparameter}

\begin{thebibliography}{1}

\bibitem{haar2007}
Ilya~A. Krishtal, Benjamin~D. Robinson, Guido~L. Weiss, and Edward~N. Wilson.
\newblock Some simple {H}aar-type wavelets in higher dimensions.
\newblock {\em J. Geom. Anal.}, 17(1):87--96, 2007.

\bibitem{LWW2013}
Demetrio Labate, Guido Weiss, and Edward Wilson.
\newblock Wavelets.
\newblock {\em Notices Amer. Math. Soc.}, 60(1):66--76, 2013.

\bibitem{LWWW2002}
R.~S. Laugesen, N.~Weaver, G.~L. Weiss, and E.~N. Wilson.
\newblock A characterization of the higher dimensional groups associated with
  continuous wavelets.
\newblock {\em J. Geom. Anal.}, 12(1):89--102, 2002.

\bibitem{weiss}
G.~Weiss and E.~N. Wilson.
\newblock The mathematical theory of wavelets.
\newblock In {\em Twentieth century harmonic analysis---a celebration ({I}l
  {C}iocco, 2000)}, volume~33 of {\em NATO Sci. Ser. II Math. Phys. Chem.},
  pages 329--366. Kluwer Acad. Publ., Dordrecht, 2001.

\end{thebibliography}
\end{document}